\def\NN{\hbox{I\kern-.2em\hbox{N}}}
\def\RR{\hbox{I\kern-.2em\hbox{R}}}
\numberwithin{equation}{section}
\newtheorem{remark}{Remark}
\newtheorem{corol}{Corollary}
\newtheorem{example}{Example}
\title{New Global Exponential Stability Criteria for Nonlinear Delay Differential Systems
with Applications to BAM Neural Networks
\thanks{The research of the second author was
partially supported by NSERC.}}
\author{Leonid Berezansky,
\thanks{Dept. of Math, Ben-Gurion University of the Negev, Beer-Sheva 84105, Israel}
\and
Elena Braverman
\thanks{Dept. of Math \& Stats, University of Calgary, 2500 University Dr. NW, Calgary, AB, Canada T2N1N4}
\and Lev Idels
\thanks{Dept. of Math, Vancouver Island University (VIU),
   900 Fifth St. Nanaimo, BC, Canada  V9S5J5}}
\begin{document}
\maketitle


\begin{abstract}
We consider a nonlinear non-autonomous system with
time-varying delays
$$
\dot{x_i}(t)=-a_i(t)x_{i}(h_i(t))+\sum_{j=1}^mF_{ij}(t,x_j(g_{ij}(t)))
$$
which has a large number of applications in the theory of artificial neural networks.
Via the M-matrix method, easily verifiable sufficient stability conditions for the nonlinear system and its linear version are
obtained.
Application of the main theorem requires just to check whether a  matrix, which is explicitly constructed
by the system's parameters, is an $M$-matrix.
Comparison with the tests obtained by K. Gopalsamy (2007) and B. Liu (2013)
for BAM neural networks illustrates novelty of the stability theorems.
Some open problems conclude the paper.
\end{abstract}

\begin{keywords}
systems of nonlinear delay differential equations,
artificial neural networks, time-varying delays,
global stability,
M-matrix,
BAM neural network,
leakage delays
\end{keywords}

\begin{AMS}
34K20, 92D25, 34K11, 34K25
\end{AMS}

\pagestyle{myheadings}
\thispagestyle{plain}
\markboth{Exponential Stability for BAM Neural Networks}{BEREZANSKY, BRAVERMAN, IDELS}

\section{Introduction}

One of the main motivations to study the nonlinear delay differential system
\begin{equation}\label{0}
\dot{x_i}(t)=-a_i(t)x_{i}(h_i(t))+\sum_{j=1}^mF_{ij}\left( t,x_j(g_{ij}(t)) \right),~t\geq 0,~ i=1,\dots,m
\end{equation}
and its linear version
\begin{equation}\label{01}
\dot{x_i}(t)=\sum_{j=1}^m a_{ij}(t)x_j(g_{ij}(t)), ~ i=1,\dots,m,
\end{equation}
is their importance in the study of artificial neural network models \cite{Ha,K3}.

For linear system (\ref{01}) several very interesting results were obtained in \cite{Dib,Gy,SoTangZou,SoTangZou2}.
In \cite{Gy} system (\ref{01})  with constant coefficients $a_{ij}$ was examined;
in \cite{SoTangZou2} the proofs were based on the assumption that $a_{ij}$ and $g_{ij}$ are continuous functions
and $|a_{ij}(t)|\leq \beta_{ij}a_{ii}(t)$.
Most of the results for system (\ref{0}) 
were obtained in the case $h_i(t)\equiv t$ (see, for example, \cite{Ar}). Also the requirement that all the functions involved in
the system are continuous seem unduly restrictive, and we relax this assumption.
In the present paper, we consider the so-called pure-delay case $h_i(t)\not\equiv t$,
assuming that all parameters are measurable functions, and $F_{ij}(t,u)$ are Caratheodory functions.
Via M-matrix Method we obtain novel stability results for nonlinear non-autonomous system \eqref{0}
and linear non-autonomous system \eqref{01}.
It is to be emphasized that our technique does not require a long sequence of other theorems
or conditions that must be proven or cited before the main result is justified.

Gopalsamy in \cite{Gop} studied a model of networks known as Bidirectional Associative Memory (BAM) with leakage delays:
\begin{equation}\label{02}
\begin{array}{l}
{\displaystyle \frac{dx_i(t)}{dt}=-a_ix_i\left(t-\tau_i^{(1)}\right)+\sum_{j=1}^n a_{ij}f_j\left( y_j\left( t-\sigma_j^{(2)} \right) \right)+I_i}\\
{\displaystyle \frac{dy_i(t)}{dt}=-b_iy_i\left(t-\tau_i^{(2)}\right)+\sum_{j=1}^n b_{ij}g_j\left( x_j \left( t-\sigma_j^{(1)}\right) \right)+J_i}
\end{array} ~~i=1,\dots,n.
\end{equation}
Here $\tau^{(k)}_{i}, \sigma^{(k)}_{j}~(k=1,2)$ are the leakage and the transmission delays accordingly.
In \cite{Gop} sufficient conditions for the existence of a unique equilibrium and its global stability for system  (\ref{02}) were obtained.
Some interesting results for system \eqref{02} were obtained via the construction of Lyapunov functionals in  \cite{Chen,Liu,Wa,Wang2,Zen,Zen2}.

To extend and improve the results obtained  in \cite{Gop,Liu,Peng}, we  apply our main theorem to  the non-autonomous system
\begin{equation}\label{03}
\begin{array}{l}
{\displaystyle \frac{dx_i(t)}{dt}=r_i(t)\left[-a_ix_i\left( h_i^{(1)}(t) \right)+\sum_{j=1}^n
a_{ij}f_j\left(y_j \left( l_j^{(2)}(t) \right) \right)+I_i\right] } \\
{\displaystyle \frac{dy_i(t)}{dt}=p_i(t)\left[-b_iy_i\left( h_i^{(2)}(t) \right)+\sum_{j=1}^n
b_{ij}g_j\left( x_j\left( l_j^{(1)}(t) \right) \right)+J_i\right] }.
\end{array}
\end{equation}
Let us quickly sketch what we accomplish here.
Section \ref{mainresults} incorporates the main result of the paper: if a certain matrix which is  explicitly constructed from the
functions and the coefficients of the system is an $M$-matrix, then the system is globally exponentially stable.
It is demonstrated that the stability condition for a nonlinear system of two equations with constant delays
improves the test obtained in \cite{Gop}.
In Section \ref{BAMsection} we examine stability of BAM models and obtain stability results that
for a nonlinear BAM systems generalize the main theorem in \cite{Liu}.
Finally, Section~\ref{discussion} contains discussion and outlines some open problems.

\section{Main Results}
\label{mainresults}
Consider for any $t_0\geq 0$ the system of delay differential equations
\begin{equation}\label{1}
\dot{x_i}(t)=-a_i(t)x_{i}(h_i(t))+\sum_{j=1}^mF_{ij}\left( t,x_j(g_{ij}(t)) \right),~t\geq t_0,~ i=1,\dots,m,
\end{equation}
with the initial conditions
\begin{equation}\label{2}
x_i(t)=\varphi_i(t),~ t\leq t_0,~ i=1,\dots,m,
\end{equation}
under the following assumptions:
\\
(a1) $a_i$ are Lebesgue measurable essentially bounded on $[0,\infty)$ functions, $0<\alpha_i\leq a_i(t)\leq A_i$;\\
(a2) $F_{ij}(t,\cdot)$ are continuous functions, $F_{ij}(\cdot, u)$ are measurable locally essentially bounded functions,
$|F_{ij}(t,u)|\leq L_{ij}|u|, t\geq 0$;\\
(a3) $h_i, g_{ij}$ are measurable functions, $0\leq t-h_i(t)\leq \tau_i,~ 0\leq t-g_{ij}(t)\leq \sigma$;\\
(a4) $\varphi_i$ are continuous functions on $[t_0-\sigma, t_0]$, where \\
$\sigma=\max\{\tau_k,\sigma_{ij}, k,i,j=1,\dots, m\}$.

Henceforth  assume that conditions $(a1)-(a4)$ hold for  problem (\ref{1}), (\ref{2}) and
its modifications, and the problem has a unique solution.\\
We will use some traditional notations.
A matrix $B=(b_{ij})_{i,j=1}^m$ is nonnegative if $b_{ij} \geq 0$ and positive if $b_{ij}>0$, $i,j=1, \dots,m$;  $\|a\|$ is a norm of a column vector
$a=(a_1,\dots,a_m)^T$ in $\RR^m$; $\|B\|$ is the corresponding matrix norm of a matrix $B$, $|a|=(|a_1|,\dots,|a_m|)^T$ and $|B|=(|b_{ij}|)_{i,j=1}^m$.
As usual, function $X(t)=(x_1(t),\dots,x_m(t))^T$
is a solution of  (\ref{1}), (\ref{2}) if it satisfies (\ref{1}) almost everywhere
for $t>t_0$ and (\ref{2}) for $t\leq t_0$. Problem (\ref{1}),(\ref{2})
has a unique global solution on $[t_0,\infty)$, if, for example, we assume  along with $(a1)-(a4)$ that functions $F_{ij}(t,u)$ are locally
Lipschitz in $u$.
The following classical definition of an $M-$matrix will be used.
\begin{definition} \cite{Berman}
A matrix $B=(b_{ij})_{i,j=1}^m$ is called a (nonsingular) {\em $M$-matrix} if $b_{ij}\leq 0, i\neq j$ and
one of the following equivalent conditions holds:\\
- there exists a positive inverse matrix $B^{-1} > 0$;\\
- the principal minors of matrix $B$ are positive.
\end{definition}

\begin{lemma}\cite{Berman}\label{lemma0}
$B$ is an $M-$matrix if $b_{ij}\leq 0, i\neq j$ and at least one of the following conditions holds:\\
- $b_{ii}>\sum_{j\neq i}|b_{ij}|,~ i=1,\dots, m$;\\
- $b_{jj}>\sum_{i\neq j}|b_{ij}|,~ j=1,\dots, m$;\\
- there exist positive numbers $\xi_i, i=1,\dots, m$ such that $\xi_ib_{ii}>\sum_{j\neq i}\xi_j|b_{ij}|,~ i=1,\dots, m$;\\
- there exist positive numbers $\xi_i, i=1,\dots, m$ such that $\xi_jb_{jj}>\sum_{i\neq j}\xi_i|b_{ij}|,~ j=1,\dots, m$.
\end{lemma}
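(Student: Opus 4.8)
The plan is to reduce all four hypotheses to a single one and then to establish positivity of every principal minor, which is one of the equivalent defining properties of an $M$-matrix. First I would observe that the four conditions form a hierarchy: the first condition is the special case $\xi_i\equiv 1$ of the third, and likewise the second is the special case of the fourth. Moreover the fourth condition for $B$ is exactly the third condition for the transpose $B^{T}$ (after swapping the roles of the two summation indices), and since $(B^{T})_{S}=(B_{S})^{T}$ for every index set $S$, the matrices $B$ and $B^{T}$ have identical principal minors; hence $B$ is an $M$-matrix if and only if $B^{T}$ is. Therefore it suffices to prove the statement under the third (weighted row-dominance) condition alone.

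Next I would remove the weights by a diagonal scaling. Setting $D=\mathrm{diag}(\xi_1,\dots,\xi_m)$ with $\xi_i>0$ and $C=BD$, the entries become $c_{ij}=\xi_j b_{ij}$, so $C$ keeps the sign pattern $c_{ij}\le 0$ for $i\ne j$, has $c_{ii}=\xi_i b_{ii}>0$, and the hypothesis reads $c_{ii}>\sum_{j\ne i}|c_{ij}|$; that is, $C$ is strictly row diagonally dominant. Because $\det(C_{S})=\det(B_{S})\prod_{i\in S}\xi_i$ for every principal submatrix indexed by $S$, the minor $\det(C_{S})$ is positive if and only if $\det(B_{S})$ is, so it is enough to prove that a strictly row diagonally dominant matrix $C$ with positive diagonal and nonpositive off-diagonal entries has all principal minors positive.

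The core step is to show $\det C>0$; applying it to every principal submatrix then finishes the proof, since each such submatrix is again strictly row diagonally dominant (dropping rows and columns only shrinks the off-diagonal row sums). Here I would use a homotopy argument. Write $C=\Delta-N$ with $\Delta=\mathrm{diag}(c_{11},\dots,c_{mm})$ and $N\ge 0$ the matrix of off-diagonal magnitudes, and set $C(s)=\Delta-sN$ for $s\in[0,1]$. For every $s\in[0,1]$ the matrix $C(s)$ is still strictly diagonally dominant, so by Gershgorin's theorem (the Levy--Desplanques criterion) none of its eigenvalues can be $0$, whence $\det C(s)\ne 0$ throughout. Since $s\mapsto\det C(s)$ is continuous and starts at $\det C(0)=\prod_i c_{ii}>0$, the intermediate value theorem forces $\det C(1)=\det C>0$.

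The main obstacle, and the part I would double-check most carefully, is this determinant-positivity claim: strict diagonal dominance only guarantees nonsingularity, not a definite sign, so the homotopy connecting $C$ to the positive-determinant diagonal matrix $\Delta$ through nonsingular matrices is precisely what pins down the sign. An alternative route avoiding the continuity argument is to factor $C=\Delta(I-\Delta^{-1}N)$ and note that $\Delta^{-1}N\ge 0$ has maximal row sum strictly below $1$, so its spectral radius is $<1$; the Neumann series then yields $C^{-1}=\big(\sum_{k\ge 0}(\Delta^{-1}N)^{k}\big)\Delta^{-1}\ge 0$, giving the inverse-positivity characterization directly, though one must then invoke irreducibility to upgrade $\ge 0$ to the strict $>0$ appearing in the stated definition.
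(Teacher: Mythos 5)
Your proof is correct, but there is nothing in the paper to compare it against: the paper states this lemma with the citation \cite{Berman} (Berman--Plemmons) and gives no proof at all, so any argument you supply is necessarily ``a different route.'' Your reductions are sound: conditions one and two are the unit-weight cases of three and four; condition four for $B$ is condition three for $B^{T}$, and passing to the transpose is legitimate precisely because you work with the principal-minor characterization, under which $B$ and $B^{T}$ are simultaneously $M$-matrices; and the scaling $C=BD$, $D=\mathrm{diag}(\xi_1,\dots,\xi_m)$, preserves both the sign pattern and the signs of all principal minors, since $D$ diagonal gives $(BD)_S=B_S D_S$ and $\det(C_S)=\det(B_S)\prod_{j\in S}\xi_j$. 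The core step is also handled correctly: every $C(s)=\Delta-sN$, $s\in[0,1]$, remains strictly row diagonally dominant with positive diagonal, so Levy--Desplanques keeps $\det C(s)\neq 0$, and continuity from $\det C(0)=\prod_i c_{ii}>0$ forces $\det C>0$; since principal submatrices inherit strict dominance and the sign pattern, all principal minors of $C$, hence of $B$, are positive, which is one of the two defining properties listed in the paper's Definition 1. You were also right to flag the subtlety in the alternative Neumann-series route: it yields only $C^{-1}=\bigl(\sum_{k\ge 0}(\Delta^{-1}N)^{k}\bigr)\Delta^{-1}\geq 0$, whereas the paper's definition literally demands $B^{-1}>0$ entrywise (a condition that reducible $M$-matrices such as the identity fail, and which is strictly speaking not equivalent to positivity of the principal minors without irreducibility); targeting the principal-minor criterion, as you did, sidesteps this defect in the paper's formulation.
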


\begin{definition}
System (\ref{1}) is globally exponentially stable if there exist $M>0, \lambda>0$
such that for any solution $X(t)$ of problem (\ref{1}),(\ref{2})
the inequality
$$
\|X(t)\|\leq M e^{-\lambda (t-t_0)}\left( \|x(t_0)\|+\sup_{t<t_0}\|\varphi(t)\| \right)
$$
holds, where $M$ and $\lambda$ do not depend on $t_0$.
\end{definition}
We define matrix $C$  as follows
\begin{equation}\label{3}
C=(c_{ij})_{i,j=1}^m, ~~c_{ii}=1-\frac{A_i(A_i+L_{ii})\tau_i+L_{ii}}{\alpha_i}, ~ c_{ij}=-\frac{A_iL_{ij}\tau_i+L_{ij}}{\alpha_i},i\neq j.
\end{equation}

\begin{theorem}\label{th1}
Suppose  $C$ defined by (\ref{3}) is an M-matrix. Then system (\ref{1}) is globally exponentially stable.
\end{theorem}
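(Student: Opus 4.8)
The plan is to reduce the pure-delay system to a perturbation of the exponentially stable pointwise system $\dot{x}_i=-a_i(t)x_i$ and then to read off exponential decay from the $M$-matrix structure of $C$. The first step is to remove the delay from the leakage term. Since every component of a solution is absolutely continuous, I would write
$$
x_i(h_i(t))=x_i(t)-\int_{h_i(t)}^{t}\dot{x}_i(s)\,ds
$$
and substitute $\dot{x}_i(s)$ from \eqref{1}. This turns \eqref{1} into $\dot{x}_i(t)=-a_i(t)x_i(t)+R_i(t)$, where $R_i(t)=a_i(t)\int_{h_i(t)}^{t}\big[-a_i(s)x_i(h_i(s))+\sum_{j}F_{ij}(s,x_j(g_{ij}(s)))\big]\,ds+\sum_{j}F_{ij}(t,x_j(g_{ij}(t)))$. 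The principal term now carries the genuine decay rate $\alpha_i$, while $R_i$ collects all delayed contributions. For $t$ in the initial layer, where $h_i(t)<t_0$, the portion of the integral lying over the history only produces a term bounded by $\sup_{s<t_0}\|\varphi(s)\|$, which I would absorb into the constants.

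Next I would introduce the exponentially weighted unknown $z_i(t)=e^{\lambda(t-t_0)}x_i(t)$ for a small $\lambda\in(0,\min_i\alpha_i)$, so that $\dot{z}_i=-(a_i(t)-\lambda)z_i+e^{\lambda(t-t_0)}R_i$. Variation of constants, together with the bound $\exp\big(-\int_s^t(a_i-\lambda)\big)\le e^{-(\alpha_i-\lambda)(t-s)}$, gives
$$
z_i(t)=e^{-\int_{t_0}^t(a_i(\theta)-\lambda)\,d\theta}z_i(t_0)+\int_{t_0}^t e^{-\int_s^t(a_i(\theta)-\lambda)\,d\theta}\,e^{\lambda(s-t_0)}R_i(s)\,ds.
$$
Writing $u_j(t)=\sup_{t_0-\sigma\le s\le t}|z_j(s)|$ and using (a1)--(a3), each occurrence of $x$ inside $e^{\lambda(s-t_0)}R_i(s)$ becomes a factor $e^{\lambda(s-\text{argument})}z$; since the delays are bounded by $\tau_i$ and $\sigma$, the nested leakage delay satisfies $s-h_i(\theta)\le2\tau_i$, so the exponential prefactors are at most $e^{2\lambda\tau_i}$, $e^{\lambda(\tau_i+\sigma)}$, $e^{\lambda\sigma}$. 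Bounding $u_j(s)\le u_j(t)$ and $\int_{t_0}^t e^{-(\alpha_i-\lambda)(t-s)}\,ds\le(\alpha_i-\lambda)^{-1}$ then yields
$$
u_i(t)\le D_i+\frac{1}{\alpha_i-\lambda}\Big(b_{ii}(\lambda)u_i(t)+\sum_{j\ne i}b_{ij}(\lambda)u_j(t)\Big),
$$
with $D_i$ proportional to $\|x(t_0)\|+\sup_{s<t_0}\|\varphi(s)\|$, and $b_{ii}(\lambda)=A_i^2\tau_i e^{2\lambda\tau_i}+A_i\tau_iL_{ii}e^{\lambda(\tau_i+\sigma)}+L_{ii}e^{\lambda\sigma}$, $b_{ij}(\lambda)=A_i\tau_iL_{ij}e^{\lambda(\tau_i+\sigma)}+L_{ij}e^{\lambda\sigma}$.

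In vector form this reads $C_\lambda U(t)\le D$, where $U=(u_i)$, $D=(D_i)$, and $(C_\lambda)_{ii}=1-b_{ii}(\lambda)/(\alpha_i-\lambda)$, $(C_\lambda)_{ij}=-b_{ij}(\lambda)/(\alpha_i-\lambda)$ have nonpositive off-diagonal entries. The crucial observation is that $C_\lambda\to C$ entrywise as $\lambda\to0^{+}$: the three summands of $b_{ii}(0)$ recombine exactly into $A_i(A_i+L_{ii})\tau_i+L_{ii}$ and those of $b_{ij}(0)$ into $A_iL_{ij}\tau_i+L_{ij}$, reproducing \eqref{3}. Since the principal minors depend continuously on the entries and $C$ is an $M$-matrix, $C_\lambda$ remains an $M$-matrix for all sufficiently small $\lambda>0$, hence $C_\lambda^{-1}>0$. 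As $u_i(t)$ is finite on every compact interval and $C_\lambda^{-1}\ge0$ preserves the inequality, I obtain $U(t)\le C_\lambda^{-1}D$ for all $t\ge t_0$, a bound independent of $t$ and linear in $\|x(t_0)\|+\sup_{s<t_0}\|\varphi(s)\|$. Unwinding $z_i$ gives $|x_i(t)|\le K_i e^{-\lambda(t-t_0)}$ with $K=C_\lambda^{-1}D$, which is precisely the required global exponential stability.

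The main obstacle is the bookkeeping that forces the coefficients of the integral inequality to converge exactly to the entries of $C$, in particular tracking which delayed term produces each summand in $b_{ii}$ and $b_{ij}$ and checking that the doubled delay in the leakage integral only inflates a harmless exponential prefactor. The second delicate point is justifying that the $M$-matrix property survives the perturbation $C\mapsto C_\lambda$; this follows from continuity of the principal minors together with the characterization in the Definition and Lemma~\ref{lemma0}, and it is what forces $\lambda$ to be taken small. Verifying a priori that $u_i(t)$ is finite on bounded intervals, so that the rearrangement to $C_\lambda U(t)\le D$ is legitimate, is routine from continuity of the solution.
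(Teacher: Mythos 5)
Your proposal is correct and follows essentially the same route as the paper's proof: rewrite the leakage term via $x_i(h_i(t))=x_i(t)-\int_{h_i(t)}^{t}\dot{x}_i(s)\,ds$, substitute the equation back into that integral, apply the exponential weight with small $\lambda\in(0,\min_i\alpha_i)$, derive a sup-norm vector inequality whose coefficient matrix tends to $C$ as $\lambda\to0^{+}$, and use continuity of the principal minors plus positivity of the inverse of an $M$-matrix to get the uniform a priori bound. The only differences are cosmetic bookkeeping: you perform the delay reduction before the exponential substitution rather than after (so your $C_\lambda$ lacks the harmless diagonal term $\lambda A_i\tau_ie^{\lambda\tau_i}$ present in the paper's $C(\lambda)$), and you keep the prehistory inside the supremum instead of splitting it off as a forcing term as in the paper's equation (\ref{1c}).
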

\begin{proof}
The solution $X(t)=(x_1(t),\dots,x_n(t))^T$ of problem (\ref{1}),(\ref{2}) is also a solution of the problem
\begin{equation}\label{1c}
\dot{x_i}(t)=-a_i(t)x_{i}(h_i(t))+\sum_{j=1}^m F_{ij}\left( t,x_j(g_{ij}(t))+\varphi_j(g_{ij}(t)))-a_i(t)\varphi_i(h_i(t) \right),~t\geq t_0,
\end{equation}
$i=1,\dots,m$,
where we assume that
$x_{i}(t)=0$, $t<t_0$ and
$\varphi_i(t)=0$ for $t\geq t_0$.
After the substitution
$x_i(t)=e^{-\lambda (t-t_0)}y_i(t), ~t\geq t_0$, where $0<\lambda < \min_{i} \alpha_{i}$, equation (\ref{1c}) has the form
\begin{equation}\label{3a}
\begin{array}{ll}
\dot{y_i}(t)= & {\displaystyle \lambda y_i(t)-e^{\lambda(t-h_i(t))}a_i(t)y_i(h_i(t))}
\\ \\  &{\displaystyle
+ \sum_{j=1}^me^{\lambda (t-t_0)}F_{ij}\left( t,e^{-\lambda (g_{ij}(t)-t_0)} y_j(g_{ij}(t))+\varphi_j(g_{ij}(t)) \right)
} \\ \\  &{\displaystyle -e^{\lambda (t-t_0)}a_i(t)\varphi_i(h_i(t)).}
\end{array}
\end{equation}
After denoting
$
\mu_i(t):= e^{\lambda(t-h_i(t))}a_i(t)-\lambda,
$
equation (\ref{3a}) can be rewritten as
\begin{eqnarray*}
\dot{y_i}(t) & = & -\mu_i(t) y_i(t)+e^{\lambda(t-h_i(t))}a_i(t)\int_{h_i(t)}^t  \dot{y_i}(s)ds \\
& & + \sum_{j=1}^me^{\lambda (t-t_0)}F_{ij}\left( t,e^{-\lambda (g_{ij}(t)-t_0)}y_j(g_{ij}(t))+\varphi_j(g_{ij}(t)) \right)
\\ & &  -e^{\lambda (t-t_0)}a_i(t)\varphi_i(h_i(t)).
\end{eqnarray*}
For $\dot{y_{i}}(s)$ we substitute the right-hand side of equation (\ref{3a})
\begin{eqnarray*}
\dot{y_i}(t) & = &-\mu_i(t) y_i(t)
+ e^{\lambda(t-h_i(t))}a_i(t)\left. \int_{h_i(t)}^t
\right[\lambda y_i(s)-e^{\lambda(s-h_i(s))}a_i(s)y_i(h_i(s))
\\
& & \left. + \sum_{j=1}^me^{\lambda (s-t_0)}F_{ij} \left( s,e^{-\lambda (g_{ij}(s)-t_0)} y_j(g_{ij}(s))+\varphi_j(g_{ij}(s))
\right) -e^{\lambda (s-t_0)}a_i(t)\varphi_i(h_i(s))\right]ds
\\
& & +\sum_{j=1}^me^{\lambda (t-t_0)}F_{ij}\left( t,e^{-\lambda g_{ij}(t)-t_0)} y_j(g_{ij}(t))+\varphi_j(g_{ij}(t)) \right)
-e^{\lambda (t-t_0)}a_i(t)\varphi_i(h_i(t)).
\end{eqnarray*}
Hence
\begin{eqnarray*}
y_i(t) & =  &  \left. e^{-\int_{t_0}^t \mu_i(s)ds} \!\!\! x_i(t_0)+\int\limits_{t_0}^t e^{-\int_s^t \mu_i(\zeta)\,d\zeta}
\left( e^{\lambda(s-h_i(s))}a_i(t) \int\limits_{h_i(s)}^s
\right[\lambda y_i(\zeta)-e^{\lambda(\zeta-h_i(\zeta))}a_i(\zeta)y_i(h_i(\zeta)) \right.
\\
&  & \left. + \sum_{j=1}^m e^{\lambda (\zeta-t_0)} F_{ij} \left( \zeta,e^{-\lambda
(g_{ij}(\zeta)-t_0)} y_j(g_{ij}(\zeta))+\varphi_j(g_{ij}(\zeta))\right)
-e^{\lambda (\zeta-t_0)}a_i(\zeta)\varphi_i(h_i(\zeta))\right]d\zeta
\\
&  &  \left.+\sum_{j=1}^me^{\lambda (s-t_0)}F_{ij}\left( s,e^{- \lambda (g_{ij}(s)-t_0)} y_j(g_{ij}(s))+\varphi_j(g_{ij}(s))
\right) -e^{\lambda (s-t_0)}a_i(s)\varphi_i(h_i(s))\right)ds.
\end{eqnarray*}
Then
\begin{eqnarray*}
|y_i(t)| & \leq & |x_i(t_0)|+\int_{t_0}^t e^{-\int_s^t \mu_i(\zeta)d\zeta}\mu_i(s)
\left(A_ie^{\lambda \tau_i}\int_{h_i(s)}^s \right[ \lambda|y_i(\zeta)|
\\
& & \left.+e^{\lambda \tau_i}A_i|y_i(h_i(\zeta))|+\sum_{j=1}^me^{\lambda \sigma}L_{ij}|y_j(g_{ij}(\zeta))|+
\left( \sum_{j=1}^m L_{ij}e^{\lambda\sigma}+e^{\lambda \tau_i} A_i \right)\|\varphi_i\|\right] d\zeta
\\
& & \left.\left.+\sum_{j=1}^me^{\lambda \sigma}L_{ij}|y_j(g_{ij}(s))|+ \left( \sum_{j=1}^m L_{ij}+e^{\lambda \tau_i}A_i \right) \|\varphi_i\|\right)\right/\mu_i(s) \, ds,
\end{eqnarray*}
where $\|\varphi_i\|=\sup_{t_0-\sigma\leq t\leq t_0} |\varphi_i(t)|$.
Let $\overline{y}_i=\max_{t_0\leq t\leq b}|y_i(t)|$.
If we fix some $b>t_0$ and denote $\overline{Y}=(\overline{y}_1,\dots,\overline{y}_n)^T$ then
\begin{eqnarray*}
\overline{y}_i & \leq & |x_i(t_0)|
+\frac{(\sum_{j=1}^m L_{ij}e^{\lambda\sigma}+e^{\lambda \tau_i }A_i)(A_ie^{\lambda \tau_i}\tau_i+1)}{\alpha_i-\lambda}\|\varphi_i\|
\\
& & \left. +\left[A_i\tau_ie^{\lambda \tau_i}
 \left(\lambda \overline{y}_i+A_ie^{\lambda \tau_i}\overline{y}_i+\sum_{j=1}^me^{\lambda \sigma}L_{ij}\overline{y}_j \right)
+\sum_{j=1}^me^{\lambda \sigma}L_{ij}\overline{y}_j \right]\right/(\alpha_i-\lambda) .
\end{eqnarray*}
We define the matrix $C(\lambda)=(c_{ij}(\lambda))_{i,j=1}^{m}$ with the entries
$$
c_{ii}(\lambda)=1-\frac{A_ie^{\lambda \tau_i}(\lambda+A_ie^{\lambda \tau_i}+e^{\lambda \sigma_{ii}}L_{ii})\tau_i+e^{\lambda \sigma_{ii}}L_{ii}}{\alpha_i-\lambda} ~,
$$
$$
c_{ij}(\lambda)=-\frac{A_ie^{\lambda \tau_i}e^{\lambda \sigma}L_{ij}\tau_i+e^{\lambda \sigma}L_{ij}}{\alpha_i-\lambda},~ i\neq j.
$$
Clearly, the vector inequality $C(\lambda) \overline{Y} \leq |X(t_0)|+M_1(\lambda)|\varphi|$ is valid for $t_0\leq t\leq b$, where
$$
M_1(\lambda)=\max_{1\leq i \leq m} \frac{(\sum_{j=1}^m L_{ij}e^{\lambda\sigma}+e^{\lambda \tau_i A_i})(A_ie^{\lambda \tau_i}\tau_i+1)}{\alpha_i-\lambda},
$$
and we have $\lim_{\lambda\rightarrow 0}C(\lambda)=C(0)=C$. By the assumption of the theorem, $C(0)$
is an M-matrix.
For $0<\lambda<\min_i \alpha_i$  the entries of the matrix $C(\lambda)$ are continuous functions; therefore, the determinant
of this matrix is a continuous function. For some small $\lambda >0$ all the principal minors of $C(\lambda)$ are positive;
the latter along with $c_{ij}(\lambda) \leq 0$, $i \neq j$ implies  that $C(\lambda)$ is an M-matrix for small $\lambda$.
If we fix such  parameter $\lambda=\lambda_0$,
then for $\overline{Y}$ there is an \textit{a priori} estimate
$$
\|\overline{Y}\|\leq M(\|X(t_0)\|+\|\varphi\|), ~~M=\|C^{-1}(\lambda_0)\| ~\max\{1,M_1(\lambda_0)\},
$$
where $M$ does not depend
on $b$ and $t_0$.
Finally, $X(t)=e^{-\lambda_0 (t-t_0)}Y(t)$,
hence
$$
\|X(t)\|\leq M \left( \|X(t_0)\|+\|\varphi\| \right) e^{-\lambda_0 (t-t_0)},
$$
which completes the proof.
\end{proof}

Consider the system with off-diagonal nonlinearities
\begin{equation}\label{1a}
\dot{x_i}(t)=-a_i(t)x_{i}(h_i(t))+\sum_{j\neq i}F_{ij}\left( t,x_j(g_{ij}(t)) \right),~t\geq 0,~ i=1,\dots,m.
\end{equation}
\begin{corol}\label{cor0}
Suppose that the matrix $C$ defined by
\begin{equation}\label{3b}
C=(c_{ij})_{i,j=1}^m, ~~c_{ii}=1-\frac{A_i^2\tau_i}{\alpha_i}, ~ c_{ij}=-\frac{A_iL_{ij}\tau_i+L_{ij}}{\alpha_i},i\neq j,
\end{equation}
is an M-matrix.
Then system (\ref{1a}) is globally exponentially stable.
\end{corol}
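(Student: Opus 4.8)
The plan is to recognize that system (\ref{1a}) is nothing more than system (\ref{1}) with the diagonal nonlinear terms removed, and then to invoke Theorem~\ref{th1} directly. First I would observe that (\ref{1a}) coincides with (\ref{1}) once we set $F_{ii}(t,u)\equiv 0$ for every $i$; the sum over $j\neq i$ in (\ref{1a}) is exactly the sum over all $j$ in (\ref{1}) with the $j=i$ term deleted. Because assumption (a2) only demands a bound of the form $|F_{ij}(t,u)|\leq L_{ij}|u|$, the identically vanishing diagonal terms admit the constant $L_{ii}=0$, and all of (a1)--(a4) continue to hold. Thus Theorem~\ref{th1} is applicable to (\ref{1a}) without modification.

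The second step is to verify that the matrix $C$ produced by Theorem~\ref{th1} for this specialized system is precisely the matrix (\ref{3b}) appearing in the corollary. The off-diagonal entries in (\ref{3}) and (\ref{3b}) are literally identical, so only the diagonal needs checking. Substituting $L_{ii}=0$ into
$$
c_{ii}=1-\frac{A_i(A_i+L_{ii})\tau_i+L_{ii}}{\alpha_i}
$$
reduces it to $c_{ii}=1-\frac{A_i^2\tau_i}{\alpha_i}$, which is exactly the diagonal entry in (\ref{3b}). Hence the matrix constructed by Theorem~\ref{th1} for (\ref{1a}) coincides term by term with the matrix $C$ defined in the statement of the corollary.

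Finally, under the hypothesis that $C$ from (\ref{3b}) is an $M$-matrix, the matrix associated with (\ref{1a}) through Theorem~\ref{th1} is an $M$-matrix as well, and the theorem immediately delivers global exponential stability of (\ref{1a}). I do not expect any genuine obstacle here: the entire content is the reduction $L_{ii}=0$ together with the resulting algebraic simplification of the diagonal entries, and no new analytic estimate is required beyond what Theorem~\ref{th1} already provides. The only point worth stating carefully is the admissibility of the choice $L_{ii}=0$ under (a2), which is immediate since $F_{ii}\equiv 0$.
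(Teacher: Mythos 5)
Your proposal is correct and matches the paper's intended argument: the paper states Corollary~\ref{cor0} without proof precisely because it is the specialization of Theorem~\ref{th1} with $F_{ii}\equiv 0$, hence $L_{ii}=0$, under which the matrix (\ref{3}) reduces entry by entry to (\ref{3b}). Your verification of this reduction, including the admissibility of $L_{ii}=0$ under (a2), is exactly the reasoning the paper leaves implicit.
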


The next corollary examines the system with a non-delay linear term
\begin{equation}\label{4}
\dot{x_i}(t)=-a_i(t)x_{i}(t)+\sum_{j=1}^m F_{ij} \left( t,x_j(g_{ij}(t)) \right),~t\geq 0,~ i=1,\dots,m.
\end{equation}
\begin{equation}\label{5}
B=(b_{ij})_{i,j=1}^m,~~ b_{ii}=1-\frac{L_{ii}}{\alpha_i}, ~~ b_{ij}=-\frac{L_{ij}}{\alpha_i}, ~i\neq j.
\end{equation}

\begin{corol}\label{cor1}
Suppose $B$ defined by (\ref{5}) is an M-matrix. Then system (\ref{4}) is globally exponentially stable.
\end{corol}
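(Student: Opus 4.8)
The plan is to recognize that Corollary~\ref{cor1} is simply the specialization of Theorem~\ref{th1} to the non-delay case of the linear term, so that no new argument is needed. First I would observe that system (\ref{4}) is precisely system (\ref{1}) with $h_i(t)\equiv t$. Under this choice one has $t-h_i(t)\equiv 0$, so assumption (a3) holds with $\tau_i=0$, while assumptions (a1), (a2), (a4) are inherited verbatim. Thus the solution of the initial value problem for (\ref{4}) is exactly a solution of (\ref{1}) with these parameters, and Theorem~\ref{th1} is applicable provided its matrix $C$ is an $M$-matrix.

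The second step is the routine algebraic reduction of the matrix (\ref{3}) at $\tau_i=0$. Setting $\tau_i=0$ in the diagonal entry gives $c_{ii}=1-\bigl(A_i(A_i+L_{ii})\cdot 0+L_{ii}\bigr)/\alpha_i=1-L_{ii}/\alpha_i$, and in each off-diagonal entry $c_{ij}=-\bigl(A_iL_{ij}\cdot 0+L_{ij}\bigr)/\alpha_i=-L_{ij}/\alpha_i$ for $i\neq j$. These coincide exactly with the entries $b_{ii}$ and $b_{ij}$ of the matrix $B$ defined in (\ref{5}); hence $C=B$ whenever $\tau_i=0$. One may note as a consistency check that the corresponding $C(\lambda)$ in the proof of Theorem~\ref{th1} collapses to $c_{ii}(\lambda)=1-e^{\lambda\sigma_{ii}}L_{ii}/(\alpha_i-\lambda)$ and $c_{ij}(\lambda)=-e^{\lambda\sigma}L_{ij}/(\alpha_i-\lambda)$, which tends to $B$ as $\lambda\to 0$, so the perturbation argument of the theorem applies without change.

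Finally, since by hypothesis $B$ is an $M$-matrix and $B=C$, the matrix $C$ of (\ref{3}) is an $M$-matrix, and Theorem~\ref{th1} directly yields the global exponential stability of (\ref{4}). There is no genuine obstacle here: the entire analytic content, including the reduction to the auxiliary problem (\ref{1c}), the exponential substitution, and the continuity-in-$\lambda$ perturbation of the $M$-matrix property, is already carried out in the proof of Theorem~\ref{th1}; the only work is to verify that the explicit matrix degenerates to $B$ when the leakage delay $\tau_i$ vanishes.
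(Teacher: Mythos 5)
Your proposal is correct and matches the paper's intent exactly: the paper states Corollary~\ref{cor1} without proof precisely because it is the specialization of Theorem~\ref{th1} to $h_i(t)\equiv t$, where $\tau_i=0$ makes the matrix $C$ of (\ref{3}) degenerate to the matrix $B$ of (\ref{5}). Your algebraic verification of this reduction, including the consistency check on $C(\lambda)$, is accurate and requires no further comment.
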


For the delay linear system
\begin{equation}\label{6}
\dot{x_i}(t)=\sum_{j=1}^m a_{ij}(t)x_j(g_{ij}(t)), ~ i=1,\dots,m,
\end{equation}
 assume that $a_{ij}$ are essentially bounded on $[0,\infty)$ functions,
$0<\alpha_i\leq -a_{ii}(t)\leq A_i, |a_{ij}(t)|\leq A_{ij}, i\neq j$,
$g_{ij}$ are measurable functions, $0\leq t-g_{ij}(t)\leq \sigma_{ij}$.
Denote
\begin{equation}\label{7}
D=(d_{ij})_{i,j=1}^m,~~ d_{ii}=1-\frac{A_i^2\sigma_{ii}}{\alpha_i}, ~
d_{ij}=-\frac{A_iA_{ij}\sigma_{ii}+A_{ij}}{\alpha_i},~i\neq j.
\end{equation}

\begin{corol}\label{cor2}
Suppose $D$ defined by (\ref{7}) is an M-matrix. Then
system (\ref{6}) is exponentially stable.
\end{corol}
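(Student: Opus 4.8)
The plan is to recognize that the linear delay system \eqref{6} is a special case of the systems already treated, so that Corollary \ref{cor0} — equivalently Theorem \ref{th1} — applies directly once the parameters are matched correctly. The key observation is that the diagonal term $a_{ii}(t)x_i(g_{ii}(t))$ of \eqref{6} is itself a delayed \emph{linear} term, so it should be absorbed into the leading term $-a_i(t)x_i(h_i(t))$ of \eqref{1} rather than treated as a diagonal nonlinearity. This casts \eqref{6} into exactly the off-diagonal form \eqref{1a}, and no new analysis beyond the already-proven theorem is required.

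Concretely, I would set
$$
a_i(t):=-a_{ii}(t),\qquad h_i(t):=g_{ii}(t),\qquad F_{ii}\equiv 0,\qquad F_{ij}(t,u):=a_{ij}(t)u\ \ (j\neq i).
$$
With this identification, system \eqref{6} takes the form \eqref{1a}. The hypotheses $(a1)$--$(a4)$ are then readily verified: the bound $0<\alpha_i\leq -a_{ii}(t)\leq A_i$ gives $(a1)$; each $F_{ij}(t,\cdot)$ is linear, hence continuous, and $F_{ij}(\cdot,u)=a_{ij}(t)u$ is measurable and essentially bounded since $a_{ij}$ is, with $|F_{ij}(t,u)|\leq A_{ij}|u|$, so $(a2)$ holds with $L_{ij}=A_{ij}$ for $j\neq i$ and $L_{ii}=0$; the delay bounds in $(a3)$ follow from $0\leq t-g_{ij}(t)\leq\sigma_{ij}$, with $\tau_i=\sigma_{ii}$.

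It remains to check that the matrix produced by Corollary \ref{cor0} coincides with $D$. Substituting $\tau_i=\sigma_{ii}$, $L_{ij}=A_{ij}$ and $L_{ii}=0$ into the entries \eqref{3b} gives
$$
c_{ii}=1-\frac{A_i^2\sigma_{ii}}{\alpha_i}=d_{ii},\qquad
c_{ij}=-\frac{A_iA_{ij}\sigma_{ii}+A_{ij}}{\alpha_i}=d_{ij}\ \ (i\neq j),
$$
so the constructed matrix equals $D$. By hypothesis $D$ is an $M$-matrix, hence Corollary \ref{cor0} yields the global exponential stability of \eqref{6}.

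Since this is a direct specialization, there is no genuine analytic obstacle; the only point requiring care is the bookkeeping of the parameter identification. In particular, one must absorb the delayed diagonal term into the leading linear term so that $L_{ii}=0$ — had one instead kept a diagonal nonlinearity, the entry $c_{ii}$ from \eqref{3} would carry an extra $L_{ii}$-contribution and would fail to reduce to $d_{ii}$. With the identification above the reduction is exact, and the conclusion follows from Theorem \ref{th1} without any additional estimate.
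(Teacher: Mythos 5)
Your proposal is correct and follows exactly the route the paper intends: Corollary~\ref{cor2} is a direct specialization of Corollary~\ref{cor0} (hence of Theorem~\ref{th1}) obtained by absorbing the delayed diagonal term into the leading term via $a_i(t)=-a_{ii}(t)$, $h_i(t)=g_{ii}(t)$, $\tau_i=\sigma_{ii}$, and taking $F_{ij}(t,u)=a_{ij}(t)u$ with $L_{ij}=A_{ij}$ for $j\neq i$ and $L_{ii}=0$, under which the matrix \eqref{3b} becomes precisely $D$ of \eqref{7}. Your remark that the diagonal term must be absorbed (rather than treated as a nonlinearity, which would spoil the match with $d_{ii}$) is exactly the right bookkeeping point, and nothing further is needed.
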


The same result holds for the linear system with non-delay diagonal terms
\begin{equation}\label{8}
\dot{x_i}(t)=-a_i(t)x_i(t)+\sum_{j \neq i} a_{ij}(t)x_j(g_{ij}(t)), ~ i=1,\dots,m,
\end{equation}
where $a_{ij}$ are essentially bounded on $[0,\infty)$ functions,
$0<\alpha_i\leq a_{i}(t)\leq A_i, |a_{ij}(t)|\leq A_{ij}, i\neq j$,
$g_{ij}$ are measurable functions, $0\leq t-g_{ij}(t)\leq \sigma$.
Denote
\begin{equation}\label{9}
 F=(f_{ij})_{i,j=1}^m, ~~f_{ii}=1, ~ f_{ij}=-\frac{A_{ij}}{\alpha_i},~i\neq j.
\end{equation}

\begin{corol}\label{cor3}
Suppose $F$  defined by (\ref{9}) is an M-matrix. Then
system (\ref{8}) is exponentially stable.
\end{corol}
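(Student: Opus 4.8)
The plan is to deduce this corollary as a direct specialization of Corollary \ref{cor1} rather than to rerun the full machinery of Theorem \ref{th1}. System (\ref{8}) carries the non-delay diagonal term $-a_i(t)x_i(t)$, which is precisely the structure of system (\ref{4}), and its off-diagonal terms $a_{ij}(t)x_j(g_{ij}(t))$ are linear, hence a special case of the Caratheodory nonlinearities allowed in (\ref{4}). The essential observation is therefore that (\ref{8}) is nothing but (\ref{4}) for a particular choice of the functions $F_{ij}$.

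First I would set $F_{ij}(t,u):=a_{ij}(t)u$ for $i\neq j$ and $F_{ii}\equiv 0$, and verify that assumptions (a1)--(a2) are met. Each $F_{ij}(t,\cdot)$ is linear, hence continuous, while $F_{ij}(\cdot,u)$ is measurable and essentially bounded because $a_{ij}$ is essentially bounded; moreover $|F_{ij}(t,u)|=|a_{ij}(t)|\,|u|\leq A_{ij}|u|$, so the growth bound holds with Lipschitz constants $L_{ij}=A_{ij}$ for $i\neq j$ and $L_{ii}=0$. The diagonal bound $0<\alpha_i\leq a_i(t)\leq A_i$ is assumed outright in the hypotheses, so (a1) holds as well, and linearity in $u$ supplies the local Lipschitz property guaranteeing a unique global solution.

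Next I would compare matrices. Substituting $L_{ii}=0$ and $L_{ij}=A_{ij}$ into the entries of $B$ from (\ref{5}) gives $b_{ii}=1-L_{ii}/\alpha_i=1$ and $b_{ij}=-L_{ij}/\alpha_i=-A_{ij}/\alpha_i$ for $i\neq j$, which are exactly the entries $f_{ii}$ and $f_{ij}$ of $F$ in (\ref{9}). Hence $B=F$. Since $F$ is assumed to be an M-matrix, so is $B$, and Corollary \ref{cor1} applies to system (\ref{8}) and yields global exponential stability. Because the system is linear, global exponential stability and exponential stability coincide, which finishes the argument.

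I do not anticipate a genuine obstacle here: the only care needed is the routine check that the linear terms satisfy hypotheses (a1)--(a2) and that the constants $L_{ij}=A_{ij}$, $L_{ii}=0$ are read off correctly so that $B$ and $F$ coincide. The whole content is thus a reduction to Corollary \ref{cor1}, with no new analytic estimate required.
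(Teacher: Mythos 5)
Your reduction is correct and is precisely the argument the paper intends: the paper states Corollary \ref{cor3} without proof as the specialization of Corollary \ref{cor1} in which $F_{ij}(t,u)=a_{ij}(t)u$ for $i\neq j$, $F_{ii}\equiv 0$, so that $L_{ij}=A_{ij}$, $L_{ii}=0$ and the matrix $B$ of (\ref{5}) coincides with $F$ of (\ref{9}). Your verification of (a1)--(a2) and the identification $B=F$ are exactly the routine checks needed, so there is nothing to add.
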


\begin{corol}\label{cor4}
Suppose $m=2$,
$A_1(A_1+L_{11})\tau_1+L_{11} < \alpha_1$ and
\begin{equation}\label{10}
(\alpha_1-A_1(A_1+L_{11})\tau_1-L_{11})(\alpha_2-A_2(A_2+L_{22})\tau_2-L_{22})>L_{12}L_{21}
(1+A_1\tau_1)(1+A_2\tau_2).
\end{equation}
Then system (\ref{1}) is globally exponentially stable.
\end{corol}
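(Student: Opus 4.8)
The plan is to deduce this directly from Theorem~\ref{th1} by verifying that the $2\times 2$ matrix $C$ from \eqref{3} is an $M$-matrix under the two stated hypotheses. Writing out its entries for $m=2$, I would record
$$
c_{11}=\frac{\alpha_1-\left(A_1(A_1+L_{11})\tau_1+L_{11}\right)}{\alpha_1},\quad
c_{22}=\frac{\alpha_2-\left(A_2(A_2+L_{22})\tau_2+L_{22}\right)}{\alpha_2},
$$
$$
c_{12}=-\frac{L_{12}(1+A_1\tau_1)}{\alpha_1},\qquad
c_{21}=-\frac{L_{21}(1+A_2\tau_2)}{\alpha_2}.
$$
Since $L_{ij}\geq 0$, $A_i>0$, $\tau_i\geq 0$ and $\alpha_i>0$, both off-diagonal entries are nonpositive, so $C$ has the required sign pattern and the definition of an $M$-matrix reduces to checking positivity of its leading principal minors.

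Next I would match the two hypotheses to the two minors. The first leading principal minor is $c_{11}$, and $c_{11}>0$ is exactly the assumption $A_1(A_1+L_{11})\tau_1+L_{11}<\alpha_1$. For the second, I would compute
$$
\det C=c_{11}c_{22}-c_{12}c_{21}
=\frac{\left(\alpha_1-A_1(A_1+L_{11})\tau_1-L_{11}\right)\left(\alpha_2-A_2(A_2+L_{22})\tau_2-L_{22}\right)-L_{12}L_{21}(1+A_1\tau_1)(1+A_2\tau_2)}{\alpha_1\alpha_2}.
$$
Because $\alpha_1\alpha_2>0$, the sign of $\det C$ is the sign of the numerator, and clearing this denominator turns the inequality $\det C>0$ into precisely condition \eqref{10}. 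Hence the two hypotheses together assert exactly that $c_{11}>0$ and $\det C>0$.

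It then remains to observe that these two positivity conditions already force $C$ to be an $M$-matrix: since $c_{12}c_{21}\geq 0$, we get $c_{11}c_{22}=\det C+c_{12}c_{21}\geq\det C>0$, whence $c_{22}>0$ as well, so all principal minors of $C$ are positive. Thus $C$ is an $M$-matrix and Theorem~\ref{th1} yields global exponential stability of \eqref{1}. I do not expect a genuine obstacle here, as the corollary is a specialization of the general theorem; the only points requiring care are the bookkeeping that identifies \eqref{10} with the determinant condition and the remark that the sign pattern of $C$ makes the positivity of $c_{22}$ automatic, so that the two displayed hypotheses indeed capture the full $M$-matrix requirement.
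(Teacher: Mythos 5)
Your proposal is correct and follows essentially the same route as the paper: write out the $2\times 2$ matrix $C$ from \eqref{3}, note the off-diagonal entries are nonpositive, identify the two hypotheses with the positivity of the principal minors, and invoke Theorem~\ref{th1}. In fact you are slightly more careful than the paper, which simply asserts that the principal minors are positive, whereas you explicitly derive $c_{22}>0$ from $c_{11}>0$, $\det C>0$ and $c_{12}c_{21}\geq 0$ --- a small but genuine gap-filling step.
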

\begin{proof}
For $m=2$ the matrix $C$ denoted by (\ref{3}) has the form
$$
C=
\left(\begin{array}{ll}
1-\frac{A_1(A_1+L_{11})\tau_1+L_{11}}{\alpha_1}& -\frac{A_1L_{12}\tau_1+L_{12}}{\alpha_1}\\ \vspace{2mm}
-\frac{A_2L_{21}\tau_2+L_{21}}{\alpha_2}& 1-\frac{A_2(A_2+L_{22})\tau_2+L_{22}}{\alpha_2}
\end{array}\right).
$$
The off-diagonal entries are negative, by the assumption of the corollary the principal minors are positive, so
$C$ is an M-matrix.
\end{proof}

\begin{corol}\label{cor5}
Suppose $m=2, L_{11}<\alpha_1$ and
$(\alpha_1-L_{11})(\alpha_2-L_{22})>L_{12}L_{21}$.
Then system (\ref{4}) is globally exponentially stable.
\end{corol}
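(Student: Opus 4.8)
The plan is to specialize Corollary \ref{cor1} to the case $m=2$, exactly paralleling the argument used for Corollary \ref{cor4}. By Corollary \ref{cor1}, it suffices to verify that the matrix $B$ defined in \eqref{5} is an $M$-matrix under the stated hypotheses.

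First I would write out $B$ explicitly for $m=2$,
$$
B=
\left(\begin{array}{ll}
1-\frac{L_{11}}{\alpha_1}& -\frac{L_{12}}{\alpha_1}\\ \vspace{2mm}
-\frac{L_{21}}{\alpha_2}& 1-\frac{L_{22}}{\alpha_2}
\end{array}\right),
$$
and observe immediately that the off-diagonal entries $b_{12}=-L_{12}/\alpha_1$ and $b_{21}=-L_{21}/\alpha_2$ are nonpositive, since the Lipschitz constants $L_{ij}$ are nonnegative and each $\alpha_i>0$. This satisfies the sign requirement in the definition of an $M$-matrix.

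Next I would check that the principal minors are positive. The leading $1\times 1$ minor is $b_{11}=1-L_{11}/\alpha_1$, which is positive precisely when $L_{11}<\alpha_1$ — the first hypothesis. For the determinant I would compute
$$
\det B=\left(1-\frac{L_{11}}{\alpha_1}\right)\left(1-\frac{L_{22}}{\alpha_2}\right)-\frac{L_{12}L_{21}}{\alpha_1\alpha_2}
=\frac{(\alpha_1-L_{11})(\alpha_2-L_{22})-L_{12}L_{21}}{\alpha_1\alpha_2},
$$
so that positivity of $\det B$ is equivalent to the inequality $(\alpha_1-L_{11})(\alpha_2-L_{22})>L_{12}L_{21}$ — the second hypothesis. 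With both principal minors positive and the off-diagonal entries nonpositive, $B$ is an $M$-matrix, and the conclusion follows from Corollary \ref{cor1}.

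There is no real obstacle here: the statement is a direct $2\times 2$ instance of Corollary \ref{cor1}, and the only substantive step is recognizing that the product condition on $(\alpha_i-L_{ii})$ is exactly the determinant condition after clearing the positive denominator $\alpha_1\alpha_2$. The argument is structurally identical to that of Corollary \ref{cor4}, with the simpler matrix $B$ replacing $C$.
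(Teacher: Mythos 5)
Your proof is correct and follows exactly the route the paper intends: Corollary \ref{cor5} is the specialization of Corollary \ref{cor1} to $m=2$, verified by checking that $B$ is an $M$-matrix via the nonpositive off-diagonal entries and positive principal minors, precisely mirroring the paper's own proof of Corollary \ref{cor4} with $B$ in place of $C$. No gaps; the determinant computation and the sign observations are exactly the substance required.
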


\begin{corol}\label{cor6}
Suppose $m=2$, $\sigma_{11} < \alpha_1/A_1^2$ and the inequality
$$
(\alpha_1-A_1^2\sigma_{11})(\alpha_2-A_2^2\sigma_{22})>A_{12}A_{21}
(1+A_1\sigma_{11})(1+A_2\sigma_{22})
$$
holds. Then system (\ref{6}) is  exponentially stable.
\end{corol}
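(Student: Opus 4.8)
The plan is to reduce Corollary~\ref{cor6} to the already-established Corollary~\ref{cor2} by verifying that the two stated scalar inequalities are precisely the conditions guaranteeing that the matrix $D$ from~\eqref{7} is an $M$-matrix in the case $m=2$. First I would write out $D$ explicitly for $m=2$:
$$
D=\left(\begin{array}{cc}
1-\frac{A_1^2\sigma_{11}}{\alpha_1} & -\frac{A_1A_{12}\sigma_{11}+A_{12}}{\alpha_1}\\
-\frac{A_2A_{21}\sigma_{22}+A_{21}}{\alpha_2} & 1-\frac{A_2^2\sigma_{22}}{\alpha_2}
\end{array}\right).
$$
Since $A_i,A_{ij},\sigma_{ii},\alpha_i$ are all nonnegative, the off-diagonal entries satisfy $d_{12}\leq 0$ and $d_{21}\leq 0$, so the sign pattern required of an $M$-matrix holds automatically.

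It then remains to check that both leading principal minors of $D$ are positive, which by the definition of an $M$-matrix (positivity of the principal minors) identifies $D$ as an $M$-matrix. The first minor is $d_{11}=1-A_1^2\sigma_{11}/\alpha_1$, and this is positive exactly when $\sigma_{11}<\alpha_1/A_1^2$, which is the first hypothesis. For the second minor I would compute $\det D=d_{11}d_{22}-d_{12}d_{21}$, pull out the common factor $(\alpha_1\alpha_2)^{-1}$, and use the factorizations $A_1A_{12}\sigma_{11}+A_{12}=A_{12}(1+A_1\sigma_{11})$ and $A_2A_{21}\sigma_{22}+A_{21}=A_{21}(1+A_2\sigma_{22})$. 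This yields
$$
\det D=\frac{1}{\alpha_1\alpha_2}\left[(\alpha_1-A_1^2\sigma_{11})(\alpha_2-A_2^2\sigma_{22})-A_{12}A_{21}(1+A_1\sigma_{11})(1+A_2\sigma_{22})\right],
$$
so $\det D>0$ is equivalent to the displayed inequality in the statement of the corollary.

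With both principal minors positive and the off-diagonal entries nonpositive, $D$ is an $M$-matrix, and Corollary~\ref{cor2} immediately yields the exponential stability of system~\eqref{6}. This argument is essentially the $m=2$ specialization of the reasoning used in the proof of Corollary~\ref{cor4}. I anticipate no genuine obstacle here, only the bookkeeping of the determinant simplification; the single point to watch is grouping the numerators correctly so that the two scalar hypotheses reproduce exactly the positivity of the two principal minors.
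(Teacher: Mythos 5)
Your proposal is correct and follows exactly the paper's approach: the paper proves Corollary~\ref{cor6} (implicitly, mirroring its explicit proof of Corollary~\ref{cor4}) by writing out the matrix $D$ of \eqref{7} for $m=2$, noting the off-diagonal entries are nonpositive, identifying the two hypotheses with the positivity of $d_{11}$ and of $\det D$, and invoking Corollary~\ref{cor2}. Your determinant factorization and the reduction to Corollary~\ref{cor2} match the paper's reasoning step for step.
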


\begin{corol}\label{cor7}
Suppose $m=2$ and
$\alpha_1\alpha_2>A_{12}A_{21}$.
Then system (\ref{8})
is exponentially stable.
\end{corol}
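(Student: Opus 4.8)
The plan is to specialize Corollary \ref{cor3} to the case $m=2$, in exact parallel with the way Corollary \ref{cor4} specializes Theorem \ref{th1}. Since Corollary \ref{cor3} already reduces exponential stability of system \eqref{8} to the single requirement that the matrix $F$ of \eqref{9} be an $M$-matrix, all I need to do is verify this requirement from the hypothesis $\alpha_1\alpha_2>A_{12}A_{21}$.

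First I would write out $F$ explicitly for $m=2$ using \eqref{9}: it is the $2\times 2$ matrix with $f_{11}=f_{22}=1$, $f_{12}=-A_{12}/\alpha_1$ and $f_{21}=-A_{21}/\alpha_2$. Since $A_{ij}\geq 0$ and $\alpha_i>0$, the off-diagonal entries satisfy $f_{ij}\leq 0$ for $i\neq j$, which is the sign condition in the definition of an $M$-matrix. It then remains to check positivity of the principal minors. The leading $1\times 1$ minor is $f_{11}=1>0$, and the $2\times 2$ minor is
$$
\det F = 1-\frac{A_{12}A_{21}}{\alpha_1\alpha_2},
$$
which is positive precisely when $\alpha_1\alpha_2>A_{12}A_{21}$.

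Hence under the stated hypothesis both principal minors of $F$ are positive and the off-diagonal entries are nonpositive, so by the definition of an $M$-matrix, $F$ is an $M$-matrix. Invoking Corollary \ref{cor3} then yields exponential stability of system \eqref{8}, completing the argument.

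I do not anticipate any genuine obstacle here: the whole content is the observation that the scalar inequality $\alpha_1\alpha_2>A_{12}A_{21}$ is exactly equivalent to the positivity of the $2\times 2$ determinant of $F$, after which the conclusion is immediate from the already-proved Corollary \ref{cor3}. The only point requiring the slightest care is confirming the sign pattern $f_{ij}\leq 0$ for $i\neq j$, which is automatic from $A_{ij}\geq 0$ and $\alpha_i>0$.
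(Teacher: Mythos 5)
Your proof is correct and follows exactly the route the paper intends: the paper states Corollary \ref{cor7} without proof as an immediate specialization of Corollary \ref{cor3}, in the same way Corollary \ref{cor4} is derived from Theorem \ref{th1} by writing out the $2\times 2$ matrix and checking the sign pattern and principal minors. Your computation of $\det F = 1 - A_{12}A_{21}/(\alpha_1\alpha_2)$ and the observation that its positivity is equivalent to the stated hypothesis is precisely the intended argument.
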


\begin{remark}\label{remark1}
By Corollary \ref{cor7}, the linear system
 \begin{equation}\label{14}
 \begin{array}{l}
\dot{x}(t)=-a_{11}x(t)+a_{12}y(t)\\
\dot{y}(t)= a_{21}x(t)-a_{22}y(t)\\
\end{array}
\end{equation}
with the coefficients $a_{ii}>0$ is exponentially stable if
\begin{equation}\label{15}
a_{11}a_{22}>a_{12}a_{21}.
\end{equation}
Condition (\ref{15}) is necessary and sufficient for exponential stability
of system (\ref{14}); therefore, Theorem \ref{th1} and its
corollaries cannot be improved.
\end{remark}

In the paper \cite{Gop} Gopalsamy considered autonomous system \eqref{02}. For $n=1$ it has the form
\begin{equation}\label{16}
\begin{array}{l}
\dot{x}(t)=-a_1x(t-\tau_1)+a_{12}f_1(y(t-\sigma_1))\\
\dot{y}(t)=-a_2y(t-\tau_2)+a_{21}f_2(x(t-\sigma_2))\\
\end{array}
\end{equation}
where $a_i>0, a_{ij}>0, \tau_i\geq 0, \sigma_i\geq 0, |f_i(u)|\leq L_i |u|~ and~ i=1,2$.
In \cite{Gop} the following global attractivity result was obtained. If $a_i\tau_i<1$ and
\begin{equation}\label{17}
\frac{1-a_1\tau_1}{1+a_1\tau_1}>\frac{a_{12}L_1}{a_1},~
\frac{1-a_2\tau_2}{1+a_2\tau_2}>\frac{a_{21}L_2}{a_2}
\end{equation}
then any solution of system (\ref{16}) tends to zero.
By Corollary \ref{cor4} equation  (\ref{16}) is exponentially stable if  $a_i\tau_i<1$ and
\begin{equation}\label{18}
\frac{(1-a_1\tau_1)(1-a_2\tau_2)}{(1+a_1\tau_1)(1+a_2\tau_2)}
>\frac{a_{12}a_{21}L_1L_2}{a_1a_2}.
\end{equation}
Obviously condition (\ref{17}) implies (\ref{18}).

\begin{example}
\label{example1}
Consider system (\ref{16}) where $a_1=0.8$, $a_2=0.5$, $a_{12}=a_{21}=1$,  $\tau_1=0.5$, $\tau_2=0.4$,
$|f_i(u)|\leq L_i |u|$ with $L_1=0.5$, $L_2=0.2$, $\sigma_i\geq 0$. Here the first inequality in (\ref{17}) does not hold
since
$$\frac{1-a_1\tau_1}{1+a_1\tau_1}=\frac{3}{7}< \frac{5}{8}=\frac{a_{12}L_1}{a_1},
$$
and therefore the result of \cite{Gop} cannot be applied. However, $a_1\tau_1=0.4<1$ and inequality (\ref{18})
$$
\frac{(1-a_1\tau_1)(1-a_2\tau_2)}{(1+a_1\tau_1)(1+a_2\tau_2)}
=\frac{2}{7}>\frac{1}{4}=\frac{a_{12}a_{21}L_1L_2}{a_1a_2}$$
holds, thus Corollary~\ref{cor4} implies exponential stability, hence for $n=1$ ($m=2$) we obtained the result which is sharper
than the relevant result in \cite{Gop}.
\end{example}

In the next section, we provide more in-depth analysis of systems with leakage delays which include \eqref{16} as a special case.

\section{BAM Network with  Time-Varying Delays}
\label{BAMsection}

In \cite{Gop} a class (\ref{02}) of BAM neural networks with leakage (forgetting) delays  was under study.
Via Lyapunov functionals method  sufficient conditions for the existence of a unique equilibrium and its global stability for system
(\ref{02}) were obtained. To extend and improve the results obtained in \cite{Gop}
and  \cite{Liu,Peng}, we will focus on the non-autonomous BAM neural network model
\begin{equation}\label{22}
\begin{array}{l}
{\displaystyle \frac{dx_i(t)}{dt}=r_i(t)\left(-a_ix_i(h_i^{(1)}(t))+\sum_{j=1}^n a_{ij}f_j \left(y_j \left( l_j^{(2)}(t)\right) \right)+I_i\right) } \\
{\displaystyle \frac{dy_i(t)}{dt}=p_i(t)\left(-b_iy_i(h_i^{(2)}(t))+\sum_{j=1}^n b_{ij}g_j \left(x_j \left( l_j^{(1)}(t)\right) \right)+J_i\right), }
\end{array}
\end{equation}
$i=1,\dots,n$, $t\geq 0$,
with the initial conditions
\begin{equation}\label{22a}
x_i(t)=\varphi_i(t), ~~y_i(t)=\varphi_{i+n}(t),~~t<0, ~~i=1,\dots,n.
\end{equation}

The following auxiliary lemma will be used.

\begin{lemma}\label{lemma1}
Let
\begin{equation}\label{20}
u_i=\sum_{j=1}^m F_{ij}(u_{j}),~i=1,\dots,m,
\end{equation}
where $|F_{ij}(u)-F_{ij}(v)|\leq L_{ij}|u-v|$,
the matrix $L$ be $L= (L_{ij})_{i,j=1}^m$ and denote by $r(L)$ the spectral radius of $L$.
If $r(L)<1$ then system  (\ref{20}) has a unique solution.
\end{lemma}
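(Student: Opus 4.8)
The plan is to recast \eqref{20} as a fixed-point equation and apply the Banach contraction principle in a suitably weighted norm. Define the map $T:\RR^m\to\RR^m$ by $(Tu)_i=\sum_{j=1}^m F_{ij}(u_j)$; a solution of \eqref{20} is exactly a fixed point of $T$, so it suffices to show that $T$ has a unique fixed point. From the Lipschitz hypothesis, for any $u,v\in\RR^m$ one has the componentwise estimate $|(Tu)_i-(Tv)_i|\leq\sum_{j=1}^m L_{ij}|u_j-v_j|$, that is, the vector inequality $|Tu-Tv|\leq L\,|u-v|$ with $L=(L_{ij})$ nonnegative. The difficulty is that this bound need not make $T$ a contraction in the Euclidean or the $\ell^\infty$ norm; the hypothesis $r(L)<1$ must first be converted into a genuine contraction constant.

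The key step, which I expect to be the main obstacle, is to produce a strictly positive weight vector $\xi=(\xi_1,\dots,\xi_m)^T>0$ and a constant $\theta\in(0,1)$ for which $L\xi\leq\theta\xi$ holds componentwise. To obtain it I would perturb $L$ to a strictly positive matrix $L_\varepsilon=L+\varepsilon E$, with $E$ the all-ones matrix; since the spectral radius depends continuously on the entries and $r(L)<1$, for $\varepsilon>0$ small enough we still have $\theta:=r(L_\varepsilon)<1$. Because $L_\varepsilon>0$ is irreducible, the Perron--Frobenius theorem furnishes a strictly positive eigenvector $\xi>0$ with $L_\varepsilon\xi=\theta\xi$, whence $L\xi\leq L_\varepsilon\xi=\theta\xi$. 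Alternatively one may use that $r(L)<1$ is equivalent to $I-L$ being a nonsingular $M$-matrix, so $(I-L)^{-1}\geq 0$ exists; taking any $b>0$ and setting $\xi=(I-L)^{-1}b>0$ gives $(I-L)\xi=b>0$, i.e. $L\xi<\xi$, and then $\theta:=\max_i (L\xi)_i/\xi_i<1$.

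With such a $\xi$ in hand I would introduce the weighted norm $\|u\|_\xi=\max_{1\leq i\leq m}|u_i|/\xi_i$ on $\RR^m$. Since $|u_j-v_j|\leq\|u-v\|_\xi\,\xi_j$, the componentwise bound yields, for each $i$,
$$
\frac{|(Tu)_i-(Tv)_i|}{\xi_i}\leq\frac{1}{\xi_i}\sum_{j=1}^m L_{ij}\,\|u-v\|_\xi\,\xi_j=\frac{(L\xi)_i}{\xi_i}\,\|u-v\|_\xi\leq\theta\,\|u-v\|_\xi,
$$
and taking the maximum over $i$ gives $\|Tu-Tv\|_\xi\leq\theta\|u-v\|_\xi$ with $\theta<1$. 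Thus $T$ is a contraction on the complete metric space $(\RR^m,\|\cdot\|_\xi)$, and the Banach fixed-point theorem delivers a unique fixed point, i.e. system \eqref{20} has a unique solution. I anticipate that the only genuinely nontrivial point is the construction of the weight $\xi$ from the spectral condition; once the weighted norm is set up, the contraction estimate and the appeal to Banach's theorem are routine.
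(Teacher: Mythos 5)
Your proof is correct, and its skeleton---recasting \eqref{20} as a fixed-point equation for $T$, establishing the componentwise bound $|Tu-Tv|\le L|u-v|$, and invoking the Banach contraction principle---is exactly that of the paper. Where you differ is in the key technical step that converts $r(L)<1$ into a contraction constant. The paper simply cites the fact that $r(L)=\inf_{\|\cdot\|}\|L\|$, the infimum taken over all norms on $\RR^m$, picks a norm with $\|L\|\le q<1$, and concludes the contraction estimate in that norm. You instead construct the norm explicitly: a Perron-type weight $\xi>0$ with $L\xi\le\theta\xi$, $\theta<1$ (obtained either by perturbing $L$ and applying Perron--Frobenius, or via the M-matrix characterization of $I-L$), followed by the weighted max-norm $\|u\|_\xi=\max_i|u_i|/\xi_i$. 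Your route is longer but self-contained, and it buys one genuine advantage: passing from the componentwise inequality $|Tu-Tv|\le L|u-v|$ to a norm inequality $\|Tu-Tv\|\le\|L\|\,\|u-v\|$ requires the norm to be \emph{monotone} (i.e., $|x|\le|y|$ entrywise implies $\|x\|\le\|y\|$), a property the abstract near-optimal norm furnished by $r(L)=\inf\|L\|$ need not have; your weighted max-norm is monotone, so your contraction estimate goes through with no hidden assumption, whereas the paper's one-line deduction tacitly relies on this point (for nonnegative $L$ it can be repaired precisely by the weight construction you give). Conversely, the paper's argument works verbatim for matrices with entries of arbitrary sign, while your Perron--Frobenius/M-matrix construction uses $L\ge 0$---which of course holds here, since the $L_{ij}$ are Lipschitz constants.
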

\begin{proof}
Consider the operator $T: \RR^m\rightarrow \RR^m$ denoted by
$$
T(u):= T( (u_1,\dots,u_m)^T)=\left( \sum_{j=1}^m F_{1j}(u_{j}),\dots,\sum_{j=1}^m F_{mj}(u_{j}) \right)^T.
$$
Then
$$
|T(u)-T(v)|\leq \left( \sum_{j=1}^m L_{1j}|u_{j}-v_{j}|,\dots,\sum_{j=1}^m L_{mj}|u_{j}-v_{j}| \right)^T=L|u-v|.
$$
It is well known that $r(L)=\inf_{\|\cdot\|}\|L\|$, where the infimum is taken on all (equivalent) norms in $\RR^m$.
Since $r(L)<1$, we can choose a norm in $\RR^m$ such that the corresponding norm $\|L\|\leq q<1$. We fix now such a norm
and have
$$
\|T(u)-T(v)\|\leq \|L\|\|u-v\|\leq q<1.
$$
By the Banach contraction principle the equation $u=T(u)$ has a unique solution.
\end{proof}

\begin{corol}\label{cor8}
Suppose at least one of the following conditions holds:\\
1. $\max |\lambda(L)|<1$, where the maximum is taken over all eigenvalues of matrix $L$.\\
2. $\max_{i} \sum_{j=1}^m L_{ij}<1$.\\
3. $\max_{j} \sum_{i=1}^m L_{ij}<1$.\\
4. $\sum_{i=1}^m\sum_{j=1}^m L^2_{ij}<1$.\\
Then system  (\ref{20}) has a unique solution.
\end{corol}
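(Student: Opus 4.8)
The plan is to show that each of the four listed conditions forces the spectral radius to satisfy $r(L)<1$, after which the conclusion is immediate from Lemma~\ref{lemma1}. Condition~1 requires no work at all: by definition $r(L)=\max|\lambda(L)|$, where the maximum runs over the eigenvalues of $L$, so condition~1 is literally the hypothesis $r(L)<1$ of Lemma~\ref{lemma1}, and the result follows directly.

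For conditions 2--4 I would invoke the classical estimate that the spectral radius is dominated by every submultiplicative matrix norm, namely $r(L)\le\|L\|$. It then suffices to recognize each of the remaining three quantities as such a norm of $L$ (equivalently of $|L|$, since the Lipschitz constants satisfy $L_{ij}\ge 0$ and hence $L=|L|$). Concretely, $\max_i\sum_{j=1}^m L_{ij}$ in condition~2 is the maximum absolute row-sum norm $\|L\|_\infty$; $\max_j\sum_{i=1}^m L_{ij}$ in condition~3 is the maximum absolute column-sum norm $\|L\|_1$; and $\left(\sum_{i=1}^m\sum_{j=1}^m L_{ij}^2\right)^{1/2}$ in condition~4 is the Frobenius norm $\|L\|_F$. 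Each of these is submultiplicative, so in every case the stated inequality yields $r(L)\le\|L\|<1$, and Lemma~\ref{lemma1} applies.

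I expect no genuine obstacle: the corollary is a routine specialization of Lemma~\ref{lemma1}, and the only points worth flagging are the passage from row and column sums to the induced norms, which relies on the sign condition $L_{ij}\ge 0$, and the handling of condition~4. The latter is the one place where I would spell out the inequality chain, since it uses $r(L)\le\|L\|_2\le\|L\|_F$ (the spectral norm dominated by the Frobenius norm) rather than an induced $p$-norm directly; both of these steps, however, are entirely standard.
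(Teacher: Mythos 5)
Your proof is correct and is precisely the intended argument: the paper states Corollary~\ref{cor8} without proof as an immediate consequence of Lemma~\ref{lemma1}, the point being exactly that condition~1 is the hypothesis $r(L)<1$ itself, while conditions~2--4 bound the row-sum, column-sum, and Frobenius norms, each of which dominates the spectral radius. Your handling of the details (nonnegativity of the $L_{ij}$, and the chain $r(L)\le\|L\|_2\le\|L\|_F$ for condition~4) is sound and nothing further is needed.
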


It should be noted that the proof of Lemma \ref{lemma1} is original and
shorter than, for example, the recently published proof \cite[Theorem 2.2]{Wong}.

Henceforth, assume that the following assumptions hold for (\ref{22}), (\ref{22a}):
\\
(b1) $r_i, p_i$ are Lebesgue measurable essentially bounded  on $[0,\infty)$ functions, $0<\alpha_i\leq r_i(t)\leq R_i, 0<\beta_i\leq p_i(t)\leq P_i$;\\
(b2) $f_j(\cdot), g_j(\cdot)$ are continuous functions;
$|f_j(u)-f_j(v)|\leq L_j^f|u-v|, |g_j(u)-g_j(v)|\leq L_j^g|u-v|$;\\
(b3) $h_i^{(1)}, h_i^{(2)}, l_j^{(1)}, l_j^{(2)}$ are Lebesgue measurable functions,
$
0\leq t-h_i^{(1)}(t)\leq \tau_i^{(1)},~ 0\leq t-h_i^{(2)}(t)\leq \tau_i^{(2)}$,\\
$0\leq t-l_i^{(1)}(t)\leq \sigma_i^{(1)},~ 0\leq t-l_i^{(2)}(t)\leq \sigma_i^{(2)};
 $ \\
(b4) $\varphi_i$ are continuous functions.

Let
$
(x^\ast,y^\ast)= \left( x_1^\ast,\dots,x_n^\ast,y_1^\ast,\dots,y_n^\ast\right)
$
be a solution of the system
\begin{equation}\label{23}
\begin{array}{l}
{\displaystyle a_ix_i=\sum_{j=1}^na_{ij}f_j(y_j)+I_i} \\
{\displaystyle b_iy_i=\sum_{j=1}^nb_{ij}g_j(x_j)+J_i.}
\end{array}
\end{equation}
Apparently the existence of a solution of system (\ref{23}) is equivalent
to the existence of the solution of the following system
\begin{equation}\label{24}
\begin{array}{l}
{\displaystyle u_i=\sum_{j=1}^na_{ij}f_j\left(\frac{v_j}{b_j}\right)+I_i}\\
{\displaystyle v_i=\sum_{j=1}^nb_{ij}g_j\left(\frac{u_j}{a_j}\right)+J_i.}
\end{array}
\end{equation}
Denoting $u_j=x_j,~j=1,\dots, n; u_j=y_{j-n},~ j=n+1,\dots, 2n$,
$$
F_{ij}(u)=\left\{\begin{array}{ll}
\frac{a_{i,j-n}}{a_i}f_{j-n}(u)+\frac{I_i}{a_i},& i=1,\dots,n; j=n+1,\dots,2n,\\
\frac{b_{i-n,j}}{b_{i-n}}g_j(u)+\frac{J_{i-n}}{a_{i-n}},& i=n+1,\dots,2n; j=1,\dots,n,\\
0,& otherwise,
\end{array}\right.
$$
we can rewrite system (\ref{23}) in the form of (\ref{20}) with $m=2n$, $|F_{ij}(u)-F_{ij}(v)|\leq L_{ij}|u-v|$.

We introduce the matrix $A=(L_{ij})_{i,j=1}^{2n}$
$$
A=\left(\begin{array}{llllll}
0&\dots&0&\frac{|a_{11}|L_1^f}{a_1}&\dots&\frac{|a_{1n}|L_n^f}{a_1}\\
-&-&-&-&-&-\\
0&\dots&0&\frac{|a_{n1}|L_1^f}{a_n}&\dots&\frac{|a_{nn}|L_n^f}{a_n}\\
\frac{|b_{11}|L_1^g}{b_1}&\dots&\frac{|b_{1n}|L_n^g}{b_1}&0&\dots&0\\
-&-&-&-&-&-\\
\frac{|b_{n1}|L_1^g}{b_n}&\dots&\frac{|b_{nn}|L_n^g}{b_n}&0&\dots&0
\end{array}\right).
$$
By the same token we can rewrite  (\ref{24}) in the form (\ref{20}), where
$$
F_{ij}(u)=\left\{\begin{array}{ll}
a_{i,j-n}f_{j-n}(\frac{u}{b_{j-n}})+I_i,& i=1,\dots,n; j=n+1,\dots,2n,\\
b_{i-n,j}g_j(\frac{u}{a_j})+J_{i-n},& i=n+1,\dots,2n; j=1,\dots,n,\\
0,& otherwise,
\end{array}\right.
$$
with $m=2n$, $|F_{ij}(u)-F_{ij}(v)|\leq L_{ij}|u-v|$, and introduce the matrix $B=(L_{ij})_{i,j=1}^{2n}$
$$
B=\left(\begin{array}{llllll}
0&\dots&0&\frac{|a_{11}|L_1^f}{b_1}&\dots&\frac{|a_{1n}|L_n^f}{b_n}\\
-&-&-&-&-&-\\
0&\dots&0&\frac{|a_{n1}|L_1^f}{b_1}&\dots&\frac{|a_{nn}|L_n^f}{b_n}\\
\frac{|b_{11}|L_1^g}{a_1}&\dots&\frac{|b_{1n}|L_n^g}{a_n}&0&\dots&0\\
-&-&-&-&-&-\\
\frac{|b_{n1}|L_1^g}{a_1}&\dots&\frac{|b_{nn}|L_n^g}{a_n}&0&\dots&0
\end{array}\right).
$$
In the following theorem we apply Lemma~\ref{lemma1} to systems (\ref{23}) and (\ref{24}) with $L=A$ and $L=B$, and obtain
conditions $1-4$ and $5-8$, respectively.

\begin{theorem}\label{th2}
Suppose at least one of the following conditions holds:

1. $\max |\lambda(A)|<1$, where maximum is taken on all eigenvalues of matrix $A$.

2. ${\displaystyle \max_i \sum_{j=1}^n \frac{|a_{ij}|L_j^f}{a_i}<1,~~ \max_i \sum_{j=1}^n \frac{|b_{ij}|L_j^g}{b_i}<1}$.

3. ${\displaystyle \max_j \sum_{i=1}^n \frac{|a_{ij}| L_j^f}{a_i}<1,~~ \max_j \sum_{i=1}^n \frac{|b_{ij}| L_j^g}{b_i}<1}$.

4. ${\displaystyle \sum_{i=1}^n \sum_{j=1}^n \left[  \left(\frac{|a_{ij}|L_j^f}{a_i}\right)^2+\left(\frac{|b_{ij}|L_j^g}{b_i}\right)^2 \right] <1 }$.

5. $\max |\lambda(B)|<1$, where maximum is taken on all eigenvalues of matrix $B$.

6. ${\displaystyle \max_i \sum_{j=1}^n \frac{|a_{ij}|L_j^f}{b_j}<1, ~~\max_i \sum_{j=1}^n \frac{|b_{ij}|L_j^g}{a_j}<1}$.

7. ${\displaystyle \max_j \sum_{i=1}^n \frac{|a_{ij}|L_j^f}{b_j}<1, ~~\max_j \sum_{i=1}^n \frac{|b_{ij}|L_j^g}{a_j}<1}$.

8. ${\displaystyle  \sum_{i=1}^n \sum_{j=1}^n  \left[ \left(\frac{|a_{ij}|L_j^f}{b_j}\right)^2+\left(\frac{|b_{ij}|L_j^g}{a_j}\right)^2 \right]  <1 }$.

Then system  (\ref{23}) has a unique solution and thus  system (\ref{22}) has a unique equilibrium.
\end{theorem}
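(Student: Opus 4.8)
The plan is to recognize Theorem~\ref{th2} as nothing more than a two-fold application of Corollary~\ref{cor8} (equivalently Lemma~\ref{lemma1}) to the two equivalent fixed-point reformulations of the equilibrium system: the one governed by the matrix $A$ and the one governed by the matrix $B$. No new analytic machinery is needed beyond what is already established.

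First I would settle the reduction from equilibria of the non-autonomous system \eqref{22} to solutions of the algebraic system \eqref{23}. An equilibrium is a constant solution, so both derivatives vanish identically in $t$; since assumption (b1) gives $r_i(t)\geq\alpha_i>0$ and $p_i(t)\geq\beta_i>0$, the positive factors $r_i(t)$ and $p_i(t)$ can be divided out, forcing the bracketed expressions to vanish, which is precisely \eqref{23}. Hence a unique equilibrium of \eqref{22} is equivalent to a unique solution of \eqref{23}, and it suffices to prove that \eqref{23} has a unique solution under any one of the conditions 1--8.

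Next I would note that the construction preceding the theorem already casts \eqref{23} in the form \eqref{20} with $m=2n$ and Lipschitz matrix $L=A$, and casts the equivalent system \eqref{24} in the form \eqref{20} with $L=B$. By Corollary~\ref{cor8}, each of its four sufficient conditions --- smallness of the spectral radius, of the maximal row sum, of the maximal column sum, and of the Frobenius norm --- guarantees a unique solution. The only real work is the bookkeeping that matches these four criteria to conditions 1--4 through the block structure of $A$: the top $n$ rows are supported on columns $n+1,\dots,2n$ with entries $|a_{ij}|L_j^f/a_i$, and the bottom $n$ rows on columns $1,\dots,n$ with entries $|b_{ij}|L_j^g/b_i$. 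Thus the maximal row sum of $A$ is the larger of the two maxima in condition~2, so its being $<1$ is exactly condition~2; the squared Frobenius norm splits as the sum in condition~4; and the spectral radius $r(A)$ reproduces condition~1 verbatim.

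Finally, the same matching for the column sums requires care, and this is the one place demanding attention rather than routine computation: in forming column sums the roles of the two blocks \emph{interchange}, since columns $1,\dots,n$ collect the $g$-entries while columns $n+1,\dots,2n$ collect the $f$-entries, so that the two inequalities of condition~3 must be paired with the correct block rather than transposed. Repeating the entire matching for the matrix $B$, whose entries are $|a_{ij}|L_j^f/b_j$ and $|b_{ij}|L_j^g/a_j$, converts the four Corollary~\ref{cor8} criteria into conditions 5--8, now via the equivalent system \eqref{24}. In every case one branch of Corollary~\ref{cor8} applies, the relevant reformulation has a unique fixed point, \eqref{23} has a unique solution, and therefore \eqref{22} has a unique equilibrium. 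I anticipate no genuine obstacle beyond the index bookkeeping in the column-sum step.
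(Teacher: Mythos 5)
Your proposal is correct and follows essentially the same route as the paper: the authors state explicitly that Theorem~\ref{th2} is obtained by applying Lemma~\ref{lemma1} (through the criteria collected in Corollary~\ref{cor8}) to system (\ref{23}) written in the form (\ref{20}) with $L=A$, yielding conditions 1--4, and to the equivalent system (\ref{24}) with $L=B$, yielding conditions 5--8. Your block-structure bookkeeping for the row sums, column sums, Frobenius norm, and spectral radius, as well as the observation that equilibria of (\ref{22}) correspond to solutions of (\ref{23}) after dividing out the positive factors $r_i(t)$, $p_i(t)$, matches the paper's (largely implicit) argument.
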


\begin{remark}\label{rm1}
Note that the conclusion of Theorem~\ref{th2} under condition 7 was obtained in paper \cite{Gop}.
\end{remark}

Below, assume that system (\ref{22}) has a unique equilibrium $(x^{\ast},y^{\ast})$.
To obtain a global stability condition for this equilibrium, consider the matrix $C_{BAM}=(c_{ij})_{i,j=1}^{2n}$, where
 \begin{equation}\label{25}
c_{ii}=\left\{\begin{array}{ll}
1-a_iR_i^2\tau_i^{(1)}/\alpha_i,& i=1,\dots,n,\\
1-b_{i-n}P_{i-n}^2\tau_{i-n}^{(2)}/\beta_{i-n},& i=n+1,\dots,2n,
\end{array}\right.
\end{equation}
\begin{equation}\label{26}
c_{ij}=\left\{\begin{array}{l}
-|a_{i,j-n}|R_iL_{j-n}^f(a_iR_i\tau_i^{(1)}+1)/(\alpha_i a_i),~
i=1,\dots,n,j=n+1,\dots,2n,\\
-|b_{i-n,j}|P_{i-n}L_j^g(b_{i-n}P_{i-n}\tau_{i-n}^{(2)}+1)/(\beta_{i-n} b_{i-n}),\,
i=n+1,\dots,2n,j=1,\dots,n,~
\\ 0,~ otherwise.
\end{array}\right.
\end{equation}

\begin{theorem}\label{th3}
Suppose  matrix $C_{BAM}$ is an M-matrix.
Then the equilibrium $(x^\ast,y^\ast)$ of system (\ref{22}) is globally exponentially stable.
\end{theorem}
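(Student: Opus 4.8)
The plan is to reduce the BAM system \eqref{22} to a $2n$-dimensional instance of the general system \eqref{1} by translating the (assumed unique) equilibrium $(x^\ast,y^\ast)$ to the origin, and then to invoke the results already proved for \eqref{1}. First I would set $\tilde{x}_i(t)=x_i(t)-x_i^\ast$ and $\tilde{y}_i(t)=y_i(t)-y_i^\ast$. Substituting into \eqref{22} and using the equilibrium identities \eqref{23} to cancel the constants $I_i,J_i$ together with $a_ix_i^\ast,b_iy_i^\ast$, the translated system becomes
$$
\begin{array}{l}
\dot{\tilde{x}}_i(t)=-r_i(t)a_i\,\tilde{x}_i\big(h_i^{(1)}(t)\big)+\sum_{j=1}^n r_i(t)a_{ij}\,\tilde{f}_j\big(\tilde{y}_j(l_j^{(2)}(t))\big),\\
\dot{\tilde{y}}_i(t)=-p_i(t)b_i\,\tilde{y}_i\big(h_i^{(2)}(t)\big)+\sum_{j=1}^n p_i(t)b_{ij}\,\tilde{g}_j\big(\tilde{x}_j(l_j^{(1)}(t))\big),
\end{array}
$$
where $\tilde{f}_j(u):=f_j(u+y_j^\ast)-f_j(y_j^\ast)$ and $\tilde{g}_j(u):=g_j(u+x_j^\ast)-g_j(x_j^\ast)$. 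By (b2) these shifted nonlinearities satisfy $\tilde{f}_j(0)=0$, $\tilde{g}_j(0)=0$, $|\tilde{f}_j(u)|\le L_j^f|u|$ and $|\tilde{g}_j(u)|\le L_j^g|u|$, so they are admissible sublinear Caratheodory nonlinearities in the sense of (a2), and the equilibrium of \eqref{22} corresponds exactly to the zero solution of the translated system.

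The second step is to read off the data entering the matrix \eqref{3} for the stacked variable $(\tilde{x}_1,\dots,\tilde{x}_n,\tilde{y}_1,\dots,\tilde{y}_n)^T$ with $m=2n$. For a row $i\in\{1,\dots,n\}$ the delayed diagonal coefficient is $r_i(t)a_i$, which by (b1) satisfies $a_i\alpha_i\le r_i(t)a_i\le a_iR_i$; hence the quantities $\alpha_i,A_i,\tau_i$ of \eqref{3} are played by $a_i\alpha_i$, $a_iR_i$, $\tau_i^{(1)}$. The only nonzero couplings link $\tilde{x}_i$ to $\tilde{y}_j$, with $|r_i(t)a_{ij}\tilde{f}_j(u)|\le R_i|a_{ij}|L_j^f|u|$, so $L_{i,n+j}=R_i|a_{ij}|L_j^f$; and there is \emph{no} diagonal nonlinearity, i.e.\ $L_{ii}=0$, so the translated system is of the off-diagonal type \eqref{1a}. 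The symmetric reading for $i\in\{n+1,\dots,2n\}$ uses $b_{i-n}\beta_{i-n}$, $b_{i-n}P_{i-n}$, $\tau_{i-n}^{(2)}$ and $L_j^g$. The transmission-delay bounds $\sigma_j^{(1)},\sigma_j^{(2)}$ from (b3) do not enter \eqref{3} at all; they are needed only to guarantee boundedness of the delays so that the continuity argument $C(\lambda)\to C$ inside the proof of Theorem~\ref{th1} goes through.

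The third step is the (routine) verification that, with these identifications and $L_{ii}=0$, the diagonal entry $c_{ii}=1-A_i^2\tau_i/\alpha_i$ of \eqref{3b} becomes $1-a_iR_i^2\tau_i^{(1)}/\alpha_i$ after one factor of $a_i$ cancels, while the off-diagonal entry $c_{ij}=-(A_iL_{ij}\tau_i+L_{ij})/\alpha_i$ factors as $-|a_{i,j-n}|R_iL_{j-n}^f(a_iR_i\tau_i^{(1)}+1)/(\alpha_i a_i)$. That is, the matrix $C$ of the translated system coincides with $C_{BAM}$ of \eqref{25}--\eqref{26}. Since $C_{BAM}$ is assumed to be an $M$-matrix, so is $C$, and Corollary~\ref{cor0} (the specialization of Theorem~\ref{th1} to off-diagonal nonlinearities) applied to the translated system gives the exponential bound $\|(\tilde{x}(t),\tilde{y}(t))\|\le Me^{-\lambda(t-t_0)}(\dots)$; undoing the translation yields global exponential stability of $(x^\ast,y^\ast)$ for \eqref{22}.

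I expect no genuine obstacle here, since the heavy analytic work is contained in Theorem~\ref{th1}: the argument is a change of variables followed by an explicit coefficient match. The two points that demand care are the correct placement of the factors $r_i(t),p_i(t)$ into the bounds — note that each multiplies \emph{both} the decay term and the summed nonlinearities, which is exactly why $a_i\alpha_i$ and $a_iR_i$ (rather than $\alpha_i,R_i$) take the roles of $\alpha_i,A_i$ in \eqref{3} — and the observation that the BAM structure forces $L_{ii}=0$, which is precisely what collapses $c_{ii}$ to the simpler form recorded in \eqref{25}. Uniqueness of the equilibrium, which the translation presupposes, is supplied by Theorem~\ref{th2}.
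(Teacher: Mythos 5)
Your proposal is correct and follows essentially the same route as the paper's own proof: translate the equilibrium to the origin, observe that the shifted system is of the off-diagonal form \eqref{1a} with $a_i\alpha_i$, $a_iR_i$, $\tau_i^{(1)}$ (and their $b,\beta,P$ counterparts) and $L_{i,n+j}=R_i|a_{ij}|L_j^f$ playing the roles of the data in \eqref{3b}, check that the resulting matrix is exactly $C_{BAM}$, and invoke Corollary~\ref{cor0}. Your coefficient matching, including the cancellation of one factor of $a_i$ in the diagonal entries and the factoring of the off-diagonal entries, agrees with \eqref{25}--\eqref{26}.
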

\begin{proof}
After the substitution $x_i(t)=u_i(t)+x_i^\ast, y_i(t)=v_i(t)+y_i^\ast$,
system (\ref{22}) has the form
\begin{equation}\label{27}
\begin{array}{l}
{\displaystyle \dot{u_i}(t)=-r_i(t)a_iu_i(h_i^{(1)}(t))+\sum_{j=1}^n
a_{ij}r_i(t)\left(f_j(v_j(l_j^{(2)}(t))+y_j^{\ast})-f_j(y_j^{\ast})\right)} \\ \\
{\displaystyle \dot{v_i}(t)=-p_i(t)b_iv_i(h_i^{(2)}(t))+\sum_{j=1}^n
b_{ij}p_i(t)\left(g_j(u_j(l_j^{(1)}(t))+x_j^{\ast})-g_j(x_j^{\ast})\right),}
\end{array}
\end{equation}
$$
x_i(t)=\left\{\begin{array}{ll}
u_i(t),& i=1,\dots,n,\\
v_{i-n}(t),& i=n+1,\dots, 2n,
\end{array}\right.
~~
a_i(t)=\left\{\begin{array}{ll}
r_i(t)a_i,& i=1,\dots,n,\\
p_{i-n}(t)b_{i-n},& i=n+1,\dots, 2n,
\end{array}\right.
$$$$
h_i(t)=\left\{\begin{array}{ll}
h_i^{(1)}(t),& i=1,\dots,n,\\
h_{i-n}^{(2)}(t),& i=n+1,\dots, 2n,
\end{array}\right.
$$ $$
g_{i,j}(t)=\left\{\begin{array}{ll}
l_{j-n}^{(2)}(t),& i=1,\dots,n, j=n+1,\dots,2n,\\
l_j^{(1)}(t),& i=n+1,\dots, 2n,j=1,\dots,n,\\
0,& otherwise,
\end{array}\right.
$$
$$
F_{i,j}(t,x)=\left\{\begin{array}{l}
a_{i,j-n}r_i(t)\left(f_{j-n}(x+y_{j-n}^{\ast})-f_{j-n}(y_{j-n}^{\ast})\right),~ i=1,\dots,n, j=n+1,\dots,2n,\\
b_{i-n,j}p_{i-n}(t)\left(g_j(x+x_j^{\ast})-g_j(x_j^{\ast})\right),~ i=n+1,\dots, 2n,j=1,\dots,n,\\
0,~ otherwise.
\end{array}\right.
$$
We have $0<\alpha_i\leq a_i(t)\leq A_i$, where
$$
\alpha_i=\left\{\begin{array}{ll}
r_ia_i,&i=1,\dots,n,\\
p_{i-n}b_{i-n},& i=n+1,\dots,2n,
\end{array}\right. ~~~
A_i=\left\{\begin{array}{ll}
R_ia_i,&i=1,\dots,n,\\
P_{i-n}b_{i-n},& i=n+1,\dots,2n,
\end{array}\right.
$$
and $|F_{ij}(t,x)|\leq L_{ij}|x|$ with the constant
$$
L_{i,j}=\left\{\begin{array}{ll}
|a_{i,j-n}|R_iL_{j-n}^f,& i=1,\dots,n, j=n+1,\dots,2n,\\
|b_{i-n,j}|P_{i-n}L_j^{g},& i=n+1,\dots, 2n,j=1,\dots,n,\\
0,& otherwise.
\end{array}\right.
$$
System (\ref{27}) with $m=2n$  has form (\ref{1a}), where matrix $C_{BAM}$  corresponds
to matrix $C$ defined by (\ref{3b}). All conditions of
Corollary~\ref{cor0} hold; therefore, the trivial solution of system (\ref{27})
is globally exponentially stable; hence the equilibrium $(x^\ast,y^\ast)$ of system (\ref{22}) is globally exponentially stable.
\end{proof}

\begin{corol}\label{cor9}
Suppose at least one of the following conditions holds: \\
1.
${\displaystyle \sum_{j=1}^n \frac{|a_{ij}|R_iL_j^f(a_iR_i\tau_i^{(1)}+1)}{\alpha_i a_i}<
1-\frac{a_iR_i^2\tau_i^{(1)}}{\alpha_i}}$,\\
${\displaystyle \sum_{j=1}^n \frac{|b_{ij}|P_iL_j^g(b_iP_i\tau_i^{(2)}+1)}{\beta_i b_i}<
1-\frac{b_iP_i^2\tau_i^{(2)}}{\beta_i},}$ ~
$i=1,\dots,n.$ \\ \\
2.
${\displaystyle \sum_{i=1}^n \frac{|a_{ij}|R_iL_j^f(a_iR_i\tau_i^{(1)}+1)}{\alpha_i a_i}<
1-\frac{b_jP_j^2\tau_j^{(2)}}{\beta_j}}$, \\
${\displaystyle \sum_{i=1}^n \frac{|b_{ij}|P_iL_j^g(b_iP_i\tau_i^{(2)}+1)}{\beta_i b_i}<
1-\frac{a_jR_j^2\tau_j^{(1)}}{\alpha_j},}$ ~
$j=1, \dots, n$.
\\
3. There exist positive numbers $\mu_k, k=1,\dots, 2n$ such that
$$
 \sum_{j=1}^n \frac{\mu_{j+n} |a_{ij}|R_iL_j^f(a_iR_i\tau_i^{(1)}+1)}{\alpha_i a_i}<
\mu_i\left(1-\frac{a_iR_i^2\tau_i^{(1)}}{\alpha_i}\right),$$ $$
\sum_{j=1}^n \frac{\mu_{j}|b_{ij}|P_iL_j^g(b_iP_i\tau_i^{(2)}+1)}{\beta_i b_i}<
\mu_{i+n}\left(1-\frac{b_iP_i^2\tau_i^{(2)}}{\beta_i}\right),$$
$(i=1,\dots,n).$
\\
4. There exist positive numbers $\mu_k, k=1,\dots, 2n$ such that
$$
\sum_{i=1}^n \frac{\mu_{i+n}|a_{ij}|R_iL_j^f(a_iR_i\tau_i^{(1)}+1)}{\alpha_i a_i}<
\mu_j\left(1-\frac{b_jP_j^2\tau_j^{(2)}}{\beta_j}\right),$$$$
\sum_{i=1}^n \frac{\mu_{j}|b_{ij}|P_iL_j^g(b_iP_i\tau_i^{(2)}+1)}{\beta_i b_i}<
\mu_{j+n}\left(1-\frac{a_jR_j^2\tau_j^{(1)}}{\alpha_j}\right),
$$
$(j=1, \dots, n$).

Then the equilibrium $(x^\ast,y^\ast)$ of system (\ref{22}) is globally exponentially stable.
\end{corol}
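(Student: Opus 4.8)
The plan is to reduce the statement entirely to Theorem~\ref{th3}, which already asserts global exponential stability of the equilibrium whenever $C_{BAM}$ is an $M$-matrix. Thus it suffices to prove that each of the four listed conditions forces the matrix $C_{BAM}$ given by (\ref{25})--(\ref{26}) to be an $M$-matrix, and for this I would simply recognize conditions $1$--$4$ as, respectively, the four sufficient criteria gathered in Lemma~\ref{lemma0}, read off from the block structure of $C_{BAM}$.

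First I would record two structural observations. Every off-diagonal entry in (\ref{26}) is minus a nonnegative quantity, so $c_{ij}\le 0$ for $i\ne j$, which is the sign hypothesis common to all four parts of Lemma~\ref{lemma0}. Next, $C_{BAM}$ is a $2\times2$ block matrix whose only nonzero off-diagonal entries occupy the upper-right block (rows $i\le n$, columns $j\in\{n+1,\dots,2n\}$) and the lower-left block (rows $i\ge n+1$, columns $j\le n$). Hence for a row index $i\le n$ the off-diagonal row sum $\sum_{j\ne i}|c_{ij}|$ collapses to $\sum_{j=1}^n|c_{i,j+n}|=\sum_{j=1}^n |a_{ij}|R_iL_j^f(a_iR_i\tau_i^{(1)}+1)/(\alpha_i a_i)$, while $c_{ii}=1-a_iR_i^2\tau_i^{(1)}/\alpha_i$; the symmetric collapse holds for rows $i\ge n+1$ and for the column sums.

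With these identifications, the matching is immediate. Condition $1$ is precisely $c_{ii}>\sum_{j\ne i}|c_{ij}|$ for all $i$ (its first line covering rows $1,\dots,n$ and its second line covering rows $n+1,\dots,2n$), i.e. the first criterion of Lemma~\ref{lemma0}; condition $2$ is the column-dominance criterion $c_{jj}>\sum_{i\ne j}|c_{ij}|$; and conditions $3$ and $4$ are the weighted row and weighted column criteria, with the positive numbers $\mu_k$ playing the role of the weights $\xi_k$. In each case Lemma~\ref{lemma0} yields that $C_{BAM}$ is an $M$-matrix, and Theorem~\ref{th3} finishes the proof.

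The only point requiring care---and the step I expect to be the sole, albeit minor, obstacle---is the weight bookkeeping in condition $4$. There the weight $\mu_j$ is attached to the diagonal entry $c_{j+n,j+n}$ and $\mu_{j+n}$ to $c_{jj}$, so matching with the fourth criterion of Lemma~\ref{lemma0} requires the block-swapping relabeling $\xi_k=\mu_{k+n}$ for $k\le n$ and $\xi_k=\mu_{k-n}$ for $k>n$. Since this relabeling is a bijection of $\{1,\dots,2n\}$ and the $\mu_k$ are arbitrary positive numbers, the existence of weights satisfying condition $4$ is equivalent to the existence of weights satisfying the criterion of Lemma~\ref{lemma0}, so the reduction goes through unchanged. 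Condition $3$, by contrast, matches directly with $\xi_k=\mu_k$.
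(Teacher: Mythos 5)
Your proposal is correct and follows essentially the same route as the paper's own proof, which consists of the single observation that, by Lemma~\ref{lemma0}, any of conditions 1--4 implies $C_{BAM}$ is an M-matrix, whereupon Theorem~\ref{th3} applies; you have simply filled in the block-structure bookkeeping (sign of off-diagonal entries, collapse of the row/column sums, and the weight relabeling) that the paper leaves implicit. One remark: your matching of condition 4 with the weighted-column criterion tacitly reads the subscript $\mu_j$ inside the second sum (a sum over $i$) as $\mu_i$ --- the factor $\mu_j$ as literally printed is constant in that sum and would not reproduce the criterion of Lemma~\ref{lemma0}, so this is evidently a typo in the paper's statement, and your reading is the intended one.
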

\begin{proof}
By Lemma~\ref{lemma0} any of the conditions $1-4$  implies
 that $C_{BAM}$ is an M-matrix.
\end{proof}

\begin{remark}
Part 3 of Corollary~\ref{cor9} coincides with \cite[Theorem 3.1]{Liu} in the case when $r_i(t)$ and $p_i(t)$ are constants.
In addition to being more general than \cite[Theorem 3.1]{Liu}, the result of Theorem~\ref{th3} does not require to find some
positive constants, i.e., the check of the signs of principal minors will immediately indicate whether such constants exist or not.
\end{remark}

In the following statement consider system (\ref{22}) without delays in the leakage terms.
\begin{corol}\label{cor10}
Suppose  $h_i^{(1)}(t)\equiv t, h_i^{(2)}(t)\equiv t$,
and  at least one of the following conditions holds:
\\
1. ${\displaystyle \sum_{j=1}^n \frac{|a_{ij}|R_iL_j^f}{\alpha_i a_i}<1,~~
\sum_{j=1}^n \frac{|b_{ij}|P_iL_j^g}{\beta_i b_i}<1},~~
(i=1, \dots, n).
$
\\
2. ${\displaystyle \sum_{i=1}^n \frac{|a_{ij}|R_iL_j^f}{\alpha_i a_i}<1,~~
\sum_{i=1}^n \frac{|b_{ij}|P_iL_j^g}{\beta_i b_i}<1}, ~~(j=1, \dots, n).
$
\\
3. There exist positive numbers $\mu_k, k=1,\dots, 2n$ such that \\
${\displaystyle \sum_{j=1}^n \frac{\mu_{j+n}|a_{ij}|R_iL_j^f}{\alpha_i a_i}<\mu_i,~~
\sum_{j=1}^n \frac{\mu_{j}|b_{ij}|P_iL_j^g}{\beta_i b_i}<\mu_{i+n}},~~
(i=1, \dots, n).
$ \\
4. There exist positive numbers $\mu_k, k=1,\dots, 2n$ such that \\
${\displaystyle \sum_{i=1}^n \frac{\mu_{i+n}|a_{ij}|R_iL_j^f}{\alpha_i a_i}<\mu_j,~~
\sum_{i=1}^n \frac{\mu_{i}|b_{ij}|P_iL_j^g}{\beta_i b_i}<\mu_{j+n},} ~~(j=1, \dots, n).
$

Then the equilibrium $(x^\ast,y^\ast)$ of system (\ref{22}) is globally exponentially stable.
\end{corol}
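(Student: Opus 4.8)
The plan is to recognize this corollary as the zero-leakage-delay specialization of Theorem~\ref{th3} (equivalently of Corollary~\ref{cor9}), so that the entire argument reduces to substituting $\tau_i^{(1)}=\tau_i^{(2)}=0$ into the matrix $C_{BAM}$ and then reading off the four alternatives of Lemma~\ref{lemma0}.

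First I would note that the hypotheses $h_i^{(1)}(t)\equiv t$ and $h_i^{(2)}(t)\equiv t$, together with assumption (b3), force $\tau_i^{(1)}=0$ and $\tau_i^{(2)}=0$ for every $i$. Substituting these values into the definition (\ref{25})--(\ref{26}) collapses the diagonal entries of $C_{BAM}$ to $c_{ii}=1$ and reduces the factors $(a_iR_i\tau_i^{(1)}+1)$ and $(b_iP_i\tau_i^{(2)}+1)$ appearing in the off-diagonal entries to $1$. Thus $C_{BAM}$ becomes the $2n\times 2n$ matrix with unit diagonal whose nonzero off-diagonal entries are $-|a_{ij}|R_iL_j^f/(\alpha_i a_i)$ in the upper block and $-|b_{ij}|P_iL_j^g/(\beta_i b_i)$ in the lower block.

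Next I would verify that the four listed inequalities are exactly the four dominance criteria of Lemma~\ref{lemma0} evaluated on this reduced matrix. Since $c_{ii}=1$, condition~1 is the strict row dominance $c_{ii}>\sum_{j\neq i}|c_{ij}|$, split into its upper-block rows ($i=1,\dots,n$) and lower-block rows ($i=n+1,\dots,2n$); condition~2 is the analogous column dominance; and conditions~3 and~4 are the weighted row and column versions with positive weights $\mu_k$. The only care needed is the block bookkeeping: in the upper rows the summation index runs over $n+1,\dots,2n$ and is reindexed to $1,\dots,n$ via $j\mapsto j-n$, while the weights pair as $\mu_{j+n}$ with the $a_{ij}$-entries and $\mu_j$ with the $b_{ij}$-entries. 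This is a routine check rather than a genuine obstacle.

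Finally, since any one of conditions~1--4 guarantees via Lemma~\ref{lemma0} that $C_{BAM}$ is an M-matrix, Theorem~\ref{th3} immediately yields global exponential stability of the equilibrium $(x^\ast,y^\ast)$, completing the proof. There is no hard step here; the substance of the corollary lies entirely in observing that removing the leakage delays makes the M-matrix test coincide with these four explicit diagonal-dominance conditions.
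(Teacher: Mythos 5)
Your proposal is correct and follows essentially the same route as the paper: the paper treats Corollary~\ref{cor10} exactly as it treats Corollary~\ref{cor9}, namely by observing that $h_i^{(1)}(t)\equiv t$, $h_i^{(2)}(t)\equiv t$ allow $\tau_i^{(1)}=\tau_i^{(2)}=0$ in (\ref{25})--(\ref{26}), so that $C_{BAM}$ has unit diagonal, and then each of conditions 1--4 is one of the four dominance criteria of Lemma~\ref{lemma0}, making $C_{BAM}$ an M-matrix and invoking Theorem~\ref{th3}. Your block-reindexing remarks ($j\mapsto j-n$, pairing of $\mu_{j+n}$ with the $a_{ij}$-entries) are exactly the bookkeeping the paper leaves implicit.
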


Consider system (\ref{22}) with $n=1$:
\begin{equation}\label{27a}
\begin{array}{l}
\frac{dx}{dt}=r(t)\left(-ax(h_1(t))+Af(y(l_2(t)))+I\right)\\\\
\frac{dy}{dt}=p(t)\left(-by(h_2(t))+Bg(x(l_1(t)))+J\right)
\end{array}
\end{equation}
where
$$
a>0,~ b>0,~ 0<\alpha\leq r(t)\leq R,~ 0<\beta\leq p(t)\leq P,~ |f(u)-f(v)|\leq L^f|u-v|,
$$$$
|g(u)-g(v)|\leq L^g|u-v|, ~ 0\leq t-h_i(t)\leq \tau_i, ~ 0\leq t-l_i(t)\leq \sigma_i,~ i=1,2.
$$

\begin{corol}\label{cor11}
Suppose
${\displaystyle
\frac{aR^2\tau_1}{\alpha}<1},$
and
$$
\frac{ABRPL^fL^g(aR\tau_1+1)(aP\tau_2+1)}{\alpha \beta ab}
<\left(1-\frac{aR^2\tau_1}{\alpha}\right)\left(1-\frac{bP^2\tau_2}{\beta}\right).
$$
Then the equilibrium
$
(x^\ast,y^\ast)
$
of system (\ref{27a}) is globally exponentially stable.
\end{corol}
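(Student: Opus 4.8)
The plan is to recognize (\ref{27a}) as the scalar case $n=1$ of the general BAM system (\ref{22}) and then apply Theorem~\ref{th3}: once the associated matrix $C_{BAM}$ is shown to be an $M$-matrix, global exponential stability of the equilibrium $(x^\ast,y^\ast)$ follows at once. So the whole problem collapses to specializing the entries (\ref{25})--(\ref{26}) to $n=1$ and verifying a $2\times 2$ $M$-matrix condition, in exactly the spirit of the proof of Corollary~\ref{cor4} for Theorem~\ref{th1}.

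First I would substitute the scalar data $a_1=a$, $b_1=b$, $a_{11}=A$, $b_{11}=B$, $R_1=R$, $P_1=P$, $\alpha_1=\alpha$, $\beta_1=\beta$, $\tau_1^{(1)}=\tau_1$, $\tau_1^{(2)}=\tau_2$, $L_1^f=L^f$, $L_1^g=L^g$ into (\ref{25})--(\ref{26}), which produces
\begin{equation*}
C_{BAM}=\begin{pmatrix}
1-\dfrac{aR^2\tau_1}{\alpha} & -\dfrac{ARL^f(aR\tau_1+1)}{\alpha a}\\[2mm]
-\dfrac{BPL^g(bP\tau_2+1)}{\beta b} & 1-\dfrac{bP^2\tau_2}{\beta}
\end{pmatrix}.
\end{equation*}
Both off-diagonal entries are non-positive, so the sign pattern required of an $M$-matrix is automatic, and the only thing left to test is positivity of the leading principal minors.

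I would then use the $2\times 2$ principal-minor characterization in the definition of an $M$-matrix: with non-positive off-diagonal entries, $C_{BAM}$ is an $M$-matrix precisely when $c_{11}>0$ and $\det C_{BAM}=c_{11}c_{22}-c_{12}c_{21}>0$. The first is exactly the hypothesis $aR^2\tau_1/\alpha<1$. For the second, since $c_{12}c_{21}>0$, the determinant condition reads $c_{11}c_{22}>c_{12}c_{21}$, and multiplying out $c_{12}c_{21}$ yields the term $ABRPL^fL^g(aR\tau_1+1)(bP\tau_2+1)/(\alpha\beta ab)$ on the right, i.e. the stated product inequality. Theorem~\ref{th3} then completes the proof.

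Since this is a pure specialization of an already-proven theorem, there is no real analytic obstacle; the one place that needs care is the bookkeeping of the doubled index set $\{1,\dots,2n\}$ at $n=1$ and, in particular, tracking which of $R,P$ and which of $a,b$ attach to each factor. On that point the factor arising from $c_{21}$ via (\ref{26}) is $(bP\tau_2+1)$, whereas the statement prints $(aP\tau_2+1)$; the derivation gives $b$, so this looks like a typographical slip in the displayed inequality rather than a genuine gap.
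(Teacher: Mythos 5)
Your proof is correct and takes essentially the same route the paper intends: specialize the entries (\ref{25})--(\ref{26}) to $n=1$ and verify the $2\times 2$ principal-minor characterization of an $M$-matrix, exactly mirroring the paper's printed proof of Corollary~\ref{cor4}, after which Theorem~\ref{th3} finishes the argument. Your side observation is also right: the derivation of $c_{12}c_{21}$ from (\ref{26}) produces the factor $(bP\tau_2+1)$, so the factor $(aP\tau_2+1)$ appearing in the stated inequality is a typographical slip in the paper, not a discrepancy in your reasoning.
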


\begin{example}
Consider the particular case of BAM network described by (\ref{27a})
\begin{equation}\label{27a_part}
\begin{array}{l}
{\displaystyle \frac{dx}{dt}=(20+\mu \sin t)\left[-x\left(t - \frac{1+|\sin t|}{2000}\right)+ \frac{1}{720} y \left(t-3\sin^2(t)\right)+ 10000 \right] }\\ \\
{\displaystyle \frac{dy}{dt}=(40+\mu \cos t) \left[-x\left(t - \frac{1+|\cos t|}{2000}\right)+ \frac{1}{200} y \left(t-2\sin^2(t)\right)+ 20000 \right] }
\end{array}
\end{equation}
for $\mu\geq 0$.
Here $L^f=L^g=a=b=1$, $\alpha=20-\mu$, $R=20+\mu$, $\beta=4-\mu$, $P=40+\mu$, $\tau_1=\tau_2=\frac{1}{1000}$, $A=\frac{1}{720}$, $B=\frac{1}{200}$.

By Corollary~\ref{cor11}, system (\ref{27a_part}) is exponentially stable if
${\displaystyle  \frac{(20+\mu)^2}{1000(20-\mu)}<1}$ and
$$ \frac{(\frac{20 + \mu}{1000} +   1)(\frac{40 + \mu}{1000} + 1)}{720 \cdot 200(20 - \mu)(40 - \mu)} <
\left(1 - \frac{(20 + \mu)^2}{1000(20 - \mu)} \right)\left(1 - \frac{(40 + \mu)^2}{1000(40 - \mu)} \right),
$$
which is satisfied, for example, if $0\leq \mu \leq 18$.

Note that for $\mu=0$ this example coincides with \cite[Example 4.1]{Liu}.
It was also mentioned in \cite{Liu} that exponential stability of (\ref{27a_part}) cannot be obtained using the results of
\cite{Gop,Li2,Li3,Wang2}, since leakage delays in (\ref{27a_part}) are time-variable.
Therefore, our results for the case $\mu>0$ and with time-varying coefficients and delays
are new and  applicable to more general models.
\end{example}

\section{Discussion and Open Problems}
\label{discussion}

To obtain sufficient stability conditions for nonlinear delay systems four different approaches might be used: construction of Lyapunov functionals, application of fixed point theory, either development of estimations for matrix or operator norms,  or making use of some special matrix ($M-$matrix) properties,
and the transformations of a given nonlinear system to an operator equation with a Volterra casual operator.  While the Lyapunov direct method has been and remains the leading technique, numerous difficulties with the theory and  applications to specific systems persist. One of the problems with using the fixed point techniques is the construction of an appropriate map (integral equation) that is sometimes quite difficult or impossible.
The technique used in papers \cite{Li2}--\cite{Lili} and \cite{Xio} is based on the construction of Lyapunov functionals.

Perhaps it is worth mentioning that the method applied here to nonlinear system (\ref{1}) is somehow related to
 the approach used in \cite{Gy} for linear system (\ref{6}); however, to the best of our knowledge,
there are no similar results for (\ref{1}).
Remark \ref{remark1},  Example \ref{example1},
Theorem \ref{th3} and its corollaries improve and extend results previously obtained
for BAM neural networks in  \cite{Chen, Gop, Liu, Wang2, Zen, Zen2}.

In the framework of this paper, we could not consider all the applications of the $M$-matrix method to specific models;
therefore we outline some particular cases and extensions that might be of interest

 for scientists who plan
to start future research in this field.

\begin{enumerate}
\item
Find global stability conditions of system (\ref{1})
 for the special cases: $$F_{ij} (t,x)=\tanh\left(\alpha_i x(g_{ij} (t))\right) \mbox{~~and~~}
F_{ij} (t,x)=\frac{1}{1+\alpha_i e^{-x(g_{ij} (t))}}.\nonumber$$
\item
Study global stability  for a more general than (\ref{1}) model:
\begin{equation}\label{1add}
\dot{x_i}(t)=-a_i(t)x_{i}(h_i(t))+\sum_{j=1}^m\sum_{k=1}^sF_{ijk}\left( t,x_j(g_{ijk}(t)) \right),~t\geq 0,~ i=1,\dots,m.\nonumber
\end{equation}
\item
Derive sufficient stability tests for the equation with a distributed transmission delay
\begin{equation}\label{1_dist_int}
\dot{x_i}(t)=-a_i(t)x_{i}(h_i(t))+\sum_{j=1}^m \int_{t-\tau_j}^t K_{ij}(t,s) F_{ij}\left( s,x_j(g_{ij}(s)) \right)~ds,
\nonumber
\end{equation}
$t\geq 0,~ i=1,\dots,m$.
\item
Investigate stability of the system with distributed delays in all terms
\begin{equation}\label{2_dist_gen}
\dot{x_i}(t)=-a_i(t) \int_{t-\sigma_i}^t \!\!\!\!\! x_{i}(h_i(s))~d_s R_i(t,s) +\sum_{j=1}^m \int_{t-\tau_j}^t
\!\!\!\!\! F_{ij}\left(
s,x_j(g_{ij}(s)) \right)~d_s T_{ij}(t,s),\nonumber
\end{equation}
$t\geq 0,~ i=1,\dots,m$.
\item
Obtain sufficient stability conditions for the system with an infinite distributed delay
\begin{equation}\label{3_dist_int}
\dot{x_i}(t)=-a_i(t)x_{i}(h_i(t))+\sum_{j=1}^m \int_{-\infty}^t K_{ij}(t,s) F_{ij}\left( s,x_j(g_{ij}(s)) \right)~ds,
\nonumber
\end{equation}
$t\geq 0$, $i=1,\dots,m$,
where $|K_{ij}(t,s)| \leq Me^{-\nu(t-s)}$. Generalize this result to the case of exponentially decaying infinite leakage delays as well.
\item
Analyze global asymptotic stability conditions of (\ref{1}) when condition\nonumber(a3) is not satisfied but
$\lim_{t\to\infty} h_i(t)=\infty$, $\lim_{t\to\infty} g_{ij}(t)=\infty$, e.g., the pantograph-type delays $h_i(t)=\lambda_i(t)$
for $0<\lambda_i<1$.
Is it possible to estimate the rate of convergence for some classes of delays?
\item
Under which conditions will solutions of BAM system (\ref{22})
with $I_i>0$, $J_i>0$ and positive initial functions be permanent (positive, bounded and separated from zero)?
\item
Apply the $M$-matrix method to the following generalization of  (\ref{1})
$$
\dot{x_i}(t)=-a_i(t)x_{i}(h_i(t))+\sum_{j=1}^m F_{ij}\left( t,x_1(g_{ij}^{(1)}(t)),\dots, x_m(g_{ij}^{(m)}(t)) \right),
$$
$t\geq 0,~ i=1,\dots,m$.
\item
Conjecture:\\
If $C$ defined by (\ref{3}) has negative off-diagonal entries $a_{ij} \leq 0$, $i \neq j$,
and its Moore-Penrose pseudoinverse matrix is nonnegative then system (\ref{1}) is stable.
\end{enumerate}

\begin{remark}
To apply the results of the present paper, first use \cite[Theorem 9]{MCM2008}
to reduce exponential stability of equations with distributed delays
to exponential stability of  equations with concentrated delays.
For some other methods see recent papers \cite{BB1,BB2,BB3} and references therein.
\end{remark}

\end{document}